\newtheorem{lemma}{Lemma}
\newtheorem{theorem}{Theorem}
\newtheorem{prop}{Proposition}
\theoremstyle{remark}
\newtheorem{remark}{\bf Remark}
\renewcommand{\Re}{\operatorname{Re}}
\newcommand{\Z}{\mathbb{Z}}
\newcommand{\C}{\mathbb{C}}
\newcommand{\N}{\mathbb{N}}
\patchcmd{\section}{\scshape}{\bfseries}{}{}
\renewcommand{\@secnumfont}{\bfseries}
\numberwithin{equation}{section}
\numberwithin{lemma}{section}
\numberwithin{theorem}{section}
\numberwithin{prop}{section}
\numberwithin{remark}{section}
\begin{document}

\title{Uniform bounds for sums of Kloosterman sums of half integral weight}

\author{Alexander Dunn}
\address{Department of Mathematics, University of Illinois, 1409 West Green
Street, Urbana, IL 61801, USA}
\email{ajdunn2@illinois.edu}
\subjclass[2010]{Primary 11L05}
\keywords{Kloosterman sums, Maass forms}

\maketitle
\begin{abstract}
For $m,n>0$ or $mn<0$ we estimate the sums
\begin{equation*}
\sum_{c \leq x} \frac{S(m,n,c,\chi)}{c},
\end{equation*}
where the $S(m,n,c,\chi)$ are Kloosterman sums attached to a multiplier $\chi$ of weight $1/2$ on the full modular group. Our estimates are uniform in $m, n$ and $x$ in analogy with the bounds for the case $mn<0$ due to Ahlgren--Andersen, and those of Sarnak--Tsimerman for the trivial multiplier when $m,n>0$. In the case $mn<0$, our estimates are stronger in the $mn$-aspect than those of Ahlgren--Andersen. We also obtain a refinement whose quality depends on the factorization of $24m-23$ and $24n-23$ as well as the best known exponent for the Ramanujan--Petersson conjecture. 
\end{abstract}

\section{Introduction and statement of results }
The classical Kloosterman sum
\begin{equation*}
S(m,n,c):=\sum_{d \pmod c} e \Big( \frac{m d+n \overline{d}}{c} \Big), \quad e(x):=\text{exp}(2 \pi i x)
\end{equation*}
plays a central part in analytic number theory. For applications, see \cite{HB,S} for example.

In this paper we study generalised Kloosterman sums $S(m,n,c,\chi)$ attached to the Dedekind eta multiplier $\chi$ of weight $1/2$. These are given by 
\begin{equation} \label{genklo}
S(m,n,c,\chi):=\sum_{\substack{0 \leq a,d<c \\ ( \begin{smallmatrix} 
a & b \\
c & d
\end{smallmatrix} ) \in \text{SL}_2(\mathbb{Z})}} \overline{\chi}  \begin{pmatrix}
 a & b \\
 c & d 
 \end{pmatrix} e \Big( \frac{\tilde{m}a +\tilde{n}d }{c} \Big), 
\end{equation}
where 
\begin{equation*}
\tilde{m}:=m-\frac{23}{24}, \quad m \in \mathbb{Z}.
\end{equation*}

The Kloosterman sums defined in \eqref{genklo} appear in Rademacher's formula for the partition function $p(n)$ \cite{R1,R2}. Cancellation amongst such sums plays a central role in establishing power saving error terms when one truncates of the formula for $p(n)$. This can be found in the work of Ahlgren and Andersen \cite{AA}. The study of closely related Kloosterman sums has applications to the coefficients of Ramanujan's well known mock theta function $f(q)$ as well. This can be found in the work of Ahlgren and the author \cite{AD}. 

Kloosterman sums with general multipliers have been studied by Bruggeman \cite{Br}, Goldfeld--Sarnak \cite{GS} and Pribitkin \cite{Pr}, amongst many others.

For the ordinary Kloosterman sums $S(m,n,c)$, Linnik \cite{Li} and Selberg \cite{Se} conjectured that there should be considerable cancellation in the sums 
\begin{equation} \label{classickl}
\sum_{c \leq x} \frac{S(m,n,c)}{c}.
\end{equation}
Sarnak and Tsimerman \cite{ST} proposed a modified version of Linnik's and Selberg's conjecture with an $\varepsilon$-``safety valve" in $m$ and $n$. In particular, the refined conjecture for \eqref{classickl} is
\begin{equation*}
\sum_{c \leq x} \frac{S(m,n,c)}{c} \ll_{\varepsilon} (|mn| x)^{\varepsilon}.
\end{equation*} 
One obtains the ``trivial bound" for \eqref{classickl} by applying Weil's bound \cite{W}
\begin{equation*}
|S(m,n,c)| \leq \tau(c) (m,n,c)^{\frac{1}{2}} \sqrt{c},
\end{equation*}
where $\tau(c)$ is the number of divisors of $c$. This yields 
\begin{equation*}
\sum_{c \leq x} \frac{S(m,n,c)}{c} \ll \tau((m,n)) x^{\frac{1}{2}} \log x.
\end{equation*}
Still the best known bound in the $x$ aspect was obtained by Kuznetsov \cite{Ku}, who proved for $m,n>0$ that
\begin{equation} \label{Ku}
\sum_{c \leq x} \frac{S(m,n,c)}{c} \ll_{m,n} x^{\frac{1}{6}} (\log x)^{\frac{1}{3}}. 
\end{equation}
Sarnak and Tsimerman \cite{ST} refined Kuznetsov's method and made the dependence on $m$ and $n$ explicit. They proved that for $m,n>0$ we have
\begin{equation} \label{STb}
\sum_{c \leq x} \frac{S(m,n,c)}{c} \ll \big(x^{\frac{1}{6}}+(mn)^{\frac{1}{6}}+(m+n)^{\frac{1}{8}} (mn)^{\frac{\theta}{2}} \big)(xmn)^{\varepsilon},
\end{equation}
where $\theta$ is any admissible exponent in the Ramanujan--Petersson conjecture for the coefficients of weight zero Maass cusp forms.  By work of Kim and Sarnak \cite[Appendix~2]{Ki}, the exponent $\theta=7/64$ is available. Ganguly and Sengupta \cite{GSe}  have generalised the results of Sarnak and Tsimerman to sums over $c$ that are divisible by a fixed integer $q$.

K\i ral \cite{Kir} obtained estimates in the case $mn<0$ using the opposite sign Kloosterman zeta function. He obtained the bound
\begin{equation*}
\sum_{c \leq x} \frac{S(m,n,c)}{c} \ll x^{\frac{1}{6}+\varepsilon} \big( (m,n)^{\varepsilon}+(mn)^{\theta} \big)+x^{\varepsilon} (mn)^{\frac{1}{4}+\varepsilon},
\end{equation*}
where $\theta$ is as above.

For the Kloosterman sum $S(m,n,c,\chi)$, Ahlgren and Andersen \cite[Theorem~1.3]{AA} proved that for $mn<0$ we have
\begin{equation} \label{AAeta}
\sum_{c \leq x} \frac{S(m,n,c,\chi)}{c} \ll_{\varepsilon}  \big(x^{\frac{1}{6}}+|mn|^{\frac{1}{4}} \big) |mn|^{\varepsilon} \log x.
\end{equation}
They obtained a stronger result for sums of Kloosterman sums $S(1,n,c,\chi)$ when $n<0$ \cite[Theorem~9.1]{AA}. This leads to an improvement in error term \cite[Theorem~1.1]{AA} when one truncates Rademacher's formula \cite{R1,R2} for the partition function $p(n)$. 

Our first Theorem improves the $mn$-aspect in \eqref{AAeta}.
\begin{theorem} \label{mainthm}
Let $m>0$ and $n<0$ be integers such that $24n-23$ is not divisible by $5^4$ or $7^4$. Then we have 
\begin{equation*}
\sum_{c \leq x} \frac{S(m,n,c,\chi)}{c} \ll_{\varepsilon}  \Big(x^{\frac{1}{6}}+\frac{m^{\frac{1}{4}}+|n|^{\frac{1}{4}}}{|mn|^{\frac{1}{308}}}+|mn|^{\frac{19}{77}} \Big) |mn|^{\varepsilon} \log^3 x.
\end{equation*}
\end{theorem}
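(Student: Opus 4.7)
My plan is to follow the overall strategy of Ahlgren--Andersen \cite{AA} but to sharpen their individual bound on Fourier coefficients of weight $1/2$ Maass forms by invoking the Katok--Sarnak / Baruch--Mao correspondence together with subconvex bounds for twisted central $L$-values. I would begin with Proskurin's Kuznetsov-type trace formula for the eta multiplier on $\mathrm{SL}_2(\Z)$, applied to a test function $\phi$ adapted to the truncation $c \leq x$ (with harmless smoothing). The continuous spectrum is treated exactly as in \cite{AA} and is responsible for the $x^{1/6}$ term. The task then reduces to estimating the discrete spectral sum
\begin{equation*}
\sum_j \widehat{\phi}(t_j)\, \rho_j(m)\, \overline{\rho_j(n)},
\end{equation*}
where $\{u_j\}$ is an orthonormal basis of weight $1/2$ Maass cusp forms with spectral parameter $t_j$ and Fourier coefficients $\rho_j(\cdot)$.

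After a dyadic decomposition $|t_j|\asymp T$, I would split each piece by Cauchy--Schwarz so that the $m$- and $n$-factors can be estimated independently. The second moment $\sum_j |\rho_j(\cdot)|^2/\cosh(\pi t_j)$ is controlled by the Ahlgren--Andersen spectral large sieve for half-integral weight, which is of Linnik--Selberg quality. To beat the Rankin--Selberg exponent $1/4$ on the complementary factor, I would invoke an \emph{individual} bound of the shape
\begin{equation*}
|\rho_j(n)|^2 \ll (1+|t_j|)^{A}\, \frac{L(1/2,\, F_j\otimes \chi_{24n-23})}{\langle F_j, F_j\rangle},
\end{equation*}
furnished by Katok--Sarnak and Baruch--Mao, where $F_j$ is the weight zero Shimura lift of $u_j$. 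The hypothesis that $24n-23$ is divisible by neither $5^4$ nor $7^4$ enters naturally here: it guarantees that the squarefree kernel of $24n-23$ is comparable (up to a bounded power of $2$ and $3$) to $|n|$, so that any subconvex saving in the $L$-value translates into a genuine improvement over $|n|^{1/4}$ at the level of Fourier coefficients.

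I would then apply the best available subconvex bound for $L(1/2,\, F_j\otimes\chi_{24n-23})$ in the twist aspect, paired with the Kim--Sarnak exponent $\theta=7/64$ to tame the contribution of exceptional eigenvalues, and symmetrize in $m\leftrightarrow n$. An optimized dyadic splitting in $T$ should produce a composite estimate of the shape
\begin{equation*}
|mn|^{\varepsilon}\log^{3}x\, \Big(x^{1/6} + \frac{m^{1/4}+|n|^{1/4}}{|mn|^{\delta_1}} + |mn|^{\tfrac14 - \delta_2}\Big),
\end{equation*}
where the numerology $\delta_1=\delta_2=1/308$ (so that $1/4-\delta_2=19/77$) emerges from balancing the strength of the subconvex exponent against the large sieve exponent and $\theta$.

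The principal obstacle will be the arithmetic bookkeeping. The half-integral weight setting forces one to track normalizations, congruence conditions, and local factors through the Katok--Sarnak formula with considerable care: the dependence on the squarefree part of $24n-23$ (where the conditions on $5^4$ and $7^4$ become essential), on the residue class modulo $24$, and on the precise constants in the Waldspurger-type identity all have to be controlled uniformly in $j$. In addition, the Bessel transform of the Kuznetsov test function must be chosen so that it simultaneously supports the large sieve bound on one side and the individual subconvex bound on the other across all dyadic ranges of $T$, while still producing the clean factor $\log^{3}x$ rather than a higher power.
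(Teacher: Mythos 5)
Your central step is a genuine gap. The individual bound $|\rho_j(n)|^2 \ll (1+|t_j|)^{A} L(1/2, F_j\otimes\chi_{24n-23})/\langle F_j,F_j\rangle$ plus twist-aspect subconvexity is only sketched: the spectral exponent $A$ is unspecified, the harmonic weights $\langle F_j,F_j\rangle$ are not controlled uniformly in $j$, and summing such individual bounds over a dyadic range $|t_j|\asymp T$ against the large sieve would require a hybrid (spectral $\times$ twist) subconvexity input that you never supply. Most importantly, you do not perform the balancing that produces the exponents in the statement; asserting that $\delta_1=\delta_2=1/308$ ``emerges'' is exactly the part that has to be done. In the paper those exponents come from a different mechanism: after the trace formula \eqref{AAkuz}, the spectrum is cut at $r_j\asymp|mn|^{\beta}$; the initial spectral segment is handled by the averaged Duke-type bound \cite[Theorem~8.1]{AA} (exponent $3/7$ in the index), applied directly to the negative index $n$ and transferred to the positive index $m$ via the Shimura lift and the $H_\theta$-hypothesis in Proposition \ref{hyperem}, while the dyadic ranges $r_j\geq|mn|^{\beta}$ use only the mean value estimates of Propositions \ref{AAmve} and \ref{andersen} together with Lemma \ref{int3}. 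Balancing $|mn|^{3/14+5\beta}$ against the Weil-bound contribution $|mn|^{1/4-\beta/2}$ of the initial range of $c$ forces $\beta=1/154$, which is precisely where $19/77$ and $1/308$ come from. A Waldspurger-plus-subconvexity route is conceptually different and not absurd, but nothing in your outline shows it reproduces (or beats) this numerology.

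Several supporting claims are also wrong. There is no continuous spectrum here: the unique cusp of $\mathrm{SL}_2(\Z)$ is non-singular for $\chi$, so no Eisenstein series occur, and the $x^{1/6}$ term comes from balancing the smoothing error $\asymp Tx^{-1/2}\log x$ in \eqref{approx} against the tail $\sqrt{x/T}$ of the cuspidal sum via $T=x^{2/3}$, not from the continuous spectrum. The hypothesis $5^4,7^4\nmid 24n-23$ does not make the squarefree kernel of $24n-23$ comparable to $|n|$ (divisibility by large squares of other primes is perfectly allowed); its actual role is that it is the hypothesis under which \cite[Theorem~8.1]{AA} applies to the negative index. Finally, $\theta=7/64$ is not needed to tame exceptional eigenvalues --- there are none in $\tilde{\mathcal{L}}_{1/2}(1,\chi)$ by Remark \ref{specremark}, and the $r_0=i/4$ form does not contribute since $n<0$ --- it enters only through the Shimura lift in Proposition \ref{hyperem} to reduce to the squarefree part of $24m-23$; likewise, in the mixed-sign case there is no holomorphic contribution to account for.
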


We also consider the case when $m,n>0$. Let $\mathcal{P}:=\big \{\frac{k(3k \pm 1)}{2} : k \in \mathbb{Z} \big \}$ be the set of generalized pentagonal numbers.

\begin{theorem} \label{mainthm2}
Let $m,n>0$ be integers be such that $m-1 \not \in \mathcal{P}$ or $n-1 \not \in \mathcal{P}$. Then we have
\begin{equation*}
\sum_{c \leq x} \frac{S(m,n,c,\chi)}{c} \ll_{\varepsilon}  \big(x^{\frac{1}{6}}+(mn)^{\frac{1}{4}} \big) (mn)^{\varepsilon} \log^2 x.
\end{equation*}
\end{theorem}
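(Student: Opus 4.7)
The plan is to establish a same-sign Kuznetsov--Proskurin trace formula for weight $1/2$ with the multiplier $\chi$ on $\mathrm{SL}_2(\Z)$, and then analyze the spectral side along the lines of the opposite-sign argument of Ahlgren--Andersen. Concretely, I would take a smooth test function $\phi$ localized near scale $4\pi\sqrt{\tilde m\tilde n}/X$ and study
\[
\sum_c \frac{S(m,n,c,\chi)}{c}\,\phi\!\left(\frac{4\pi\sqrt{\tilde m\tilde n}}{c}\right),
\]
then dyadically decompose in $X$ and reassemble the estimates to recover the truncated sum up to a factor of $\log x$.

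For the range $c\ll(mn)^{1/2}$ the Weil bound $|S(m,n,c,\chi)|\ll c^{1/2+\varepsilon}$ already contributes the required $(mn)^{1/4+\varepsilon}$ term. For $c\gg(mn)^{1/2}$ the Kuznetsov formula decomposes the weighted sum into (i) a Maass contribution $\sum_j \rho_j(m)\overline{\rho_j(n)}\,\widehat\phi(t_j)$ over weight $1/2$ Maass cusp forms with multiplier $\chi$ and spectral parameters $t_j$ (with $\lambda_j=\tfrac14+t_j^2$), and (ii) a holomorphic cusp-form contribution proportional to $a_f(m)\overline{a_f(n)}$ summed over a basis of weight $1/2$ holomorphic forms with multiplier $\chi$ on $\mathrm{SL}_2(\Z)$. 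This latter space is one-dimensional, spanned by the Dedekind $\eta$; by Euler's pentagonal number theorem, $a_\eta(m)\ne 0$ iff $m-1\in\mathcal P$, so the hypothesis of Theorem~\ref{mainthm2} annihilates (ii).

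For (i) I would invoke the Katok--Sarnak Shimura lift carrying weight $1/2$ Maass cusp forms with multiplier $\chi$ to weight zero Maass forms on a congruence subgroup of $\mathrm{SL}_2(\Z)$, which permits the use of the Kim--Sarnak exponent $\theta\le 7/64$ and the half-integral-weight spectral large sieve developed by Ahlgren--Andersen. Combined with standard Bessel asymptotics for the Kuznetsov transform $\widehat\phi(t)$, this yields $\ll x^{1/6}(mn)^\varepsilon$ for the tempered part of the spectrum. The finitely many exceptional $it_j\in(0,\theta]$ contribute at most $(mn)^{\theta/2+\varepsilon}$, which is dominated by the $(mn)^{1/4}$ from the Weil range for every admissible $\theta<1/2$.

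The main obstacle is keeping the dependence on $m$ and $n$ uniform throughout: $\widehat\phi(t)$ has effective support only up to $|t|\lesssim \sqrt{\tilde m\tilde n}/c$, so the large sieve must be applied on an interval whose length grows with $\sqrt{mn}$, and this growth has to be balanced precisely against the Weil cutoff at $c\asymp(mn)^{1/2}$ to avoid polluting either term. The vanishing of the $\eta$-piece is essential because the Kuznetsov transform evaluated on the holomorphic weight $1/2$ spectrum would otherwise produce a term with no available averaging, breaking the uniformity in $m$ and $n$.
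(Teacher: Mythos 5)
There is a genuine gap in your treatment of the holomorphic term. In the same-sign Proskurin--Kuznetsov formula \eqref{PrKu}, the holomorphic contribution $\mathcal{U}$ in \eqref{U} runs over orthonormal bases of $S_{\frac12+2l}(1,\chi)$ for \emph{all} $l\geq 1$, i.e.\ over weights $5/2,9/2,13/2,\dots$; it is not the one-dimensional weight $1/2$ space spanned by $\eta$. The form $\eta$ enters the spectral side instead through the discrete Maass spectrum, as the residual eigenform $u_0=\sqrt{3/\pi}\,(6y)^{1/4}\eta(\tau)$ with $r_0=i/4$ ($\lambda_0=3/16$) appearing in $\mathcal{V}$ in \eqref{V}, and it is precisely this term that the hypothesis $m-1\notin\mathcal P$ or $n-1\notin\mathcal P$ annihilates via the pentagonal number theorem (Remark \ref{specremark}); when both $m-1,n-1\in\mathcal P$ this term produces the genuine main term $C(m,n)x^{1/2}$ noted in Remark 1.1. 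Consequently your claim that the hypothesis makes the holomorphic contribution vanish is false, and your argument omits the bound $\mathcal{U}\ll_\varepsilon (mn)^{1/4+\varepsilon}$ (valid for $x\geq 4\pi\sqrt{\tilde m\tilde n}$), which is one of the two substantive estimates in the proof: it requires mapping $f\mapsto f(24\tau)$ into $S_{\frac12+2l}(576,\chi_{12}\nu_\theta)$, the half-integral weight Petersson formula together with Weil-type bounds for the twisted Kloosterman sums $K_{\chi_{12}}$, and, crucially, an estimate for $\sum_{l\geq1}(2l-\tfrac12)\,|\check\phi(\tfrac12+2l)|$ that is uniform in the weight (Lemma \ref{int2}, following Sarnak--Tsimerman's analysis of the transitional ranges of the Bessel functions). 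Without this your proof does not close.

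On the Maass side your plan is heavier than necessary and slightly off target: no Shimura lift, Katok--Sarnak correspondence, or Kim--Sarnak exponent is needed for this theorem (those enter only in the refinement of Theorem \ref{auxthm2}), and apart from $r_0=i/4$ there are no exceptional eigenvalues for $(\tfrac12,\chi)$ since $r_j>1.9$ otherwise. What is actually needed is the mean value estimate of Andersen--Duke for positively indexed coefficients (Proposition \ref{andersen}) -- note the Ahlgren--Andersen mean value theorem is stated for $n<0$ and has a different order of magnitude -- combined with Cauchy--Schwarz, the transform bounds $\hat\phi(r)\ll\min(r^{-1},r^{-2}x/T)$, a dyadic sum over the spectral parameter, and the choice $T=x^{2/3}$ balancing against the truncation error in \eqref{approx}; this is what produces the $x^{1/6}$ term.
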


\begin{remark}
When both $m,n>0$ are such that $m-1 \in \mathcal{P}$ and $n-1 \in \mathcal{P}$, we have the asymptotic formula 
\begin{equation*}
\sum_{c \leq x} \frac{S(m,n,c,\chi)}{c}=C(m,n) x^{\frac{1}{2}}+O_{m,n} (x^{\frac{1}{6}}),
\end{equation*}
for some constant $C(m,n)$. See \cite[Theorem~8]{AA2}.
\end{remark}

We also obtain refined bounds which recognise the arithmetic of $24m-23$ and $24n-23$. In analogy with the result of Sarnak and Tsimerman, these depend on progress toward the Ramanujan--Petersson conjecture for weight zero Maass forms on $\Gamma_0(N)$ (cf. the $H_{\theta}$-hypothesis in Section \ref{heckemass}). 

\begin{theorem} \label{auxthm2}
Let $m,n>0$ be integers such that $m-1 \not \in \mathcal{P}$ or $n-1 \not \in \mathcal{P}$. Suppose $m_0,n_0$ are integers such that $24m-23=m_0^2 s$ and $24n-23=n_0^2 t$ with $s$ and $t$ square-free integers. Then
\begin{equation*}
\sum_{c \leq x} \frac{S(m,n,c,\chi)}{c}  \ll_{\varepsilon} \Big( x^{\frac{1}{6}}+ (st)^{\frac{1}{4}}+(st)^{\frac{1}{12}} (mn)^{\frac{1}{6}}+ (mnst)^{\frac{1}{8}+\frac{\theta}{4}}  \Big) (mn)^{\varepsilon} \log^3 x,
\end{equation*}
where $\theta$ is any admissible exponent toward the Ramanujan--Petersson conjecture.
\end{theorem}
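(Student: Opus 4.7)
The plan is to adapt the Sarnak--Tsimerman \cite{ST} refinement of Kuznetsov's method to the weight $1/2$ setting with the eta multiplier. The starting point is a Proskurin-type Kuznetsov formula for $\chi$ on the full modular group (already exploited in \cite{AA}): after a smooth dyadic partition of the $c$-sum $\sum_{c\leq x} S(m,n,c,\chi)/c$, each piece opens spectrally into a sum over a fixed orthonormal basis $\{u_j\}$ of weight $1/2$ Maass cusp forms for $\chi$, together with an Eisenstein contribution from the continuous spectrum. The hypothesis $m-1\not\in\mathcal{P}$ or $n-1\not\in\mathcal{P}$ is precisely what eliminates the $x^{1/2}$ main term from $\eta$-type residual forms (cf.\ the remark above). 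The transform on the spectral side is, up to minor modifications, the same $J$-Bessel transform analysed in \cite{ST}, so the stationary phase analysis and the natural division of the $t_j$-axis carry over essentially unchanged.

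The crucial new ingredient is the Shimura--Niwa lift. Writing $24m-23=m_0^2 s$ and $24n-23=n_0^2 t$ with $s,t$ squarefree, this lift attaches to each $u_j$ a weight zero Maass cusp form on some $\Gamma_0(N)$ with $N$ uniformly bounded, and expresses the Fourier coefficient $\rho_j(m)$ as a short linear combination of Hecke eigenvalues $\lambda_j(d)$ for $d\mid m_0$, multiplied by the coefficient $\widetilde{\rho}_j(s)$ of the lift at $s$; an analogous statement holds for $\rho_j(n)$ in terms of $n_0$ and $t$. Under the $H_{\theta}$-hypothesis this yields
\[
|\rho_j(m)|^2 \ll_{\varepsilon} (mn)^{\varepsilon}\, m_0^{2\theta}\, |\widetilde{\rho}_j(s)|^2
\]
up to the standard spectral weight, and symmetrically for $n$. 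This is the mechanism that replaces $m,n$ by their squarefree parts $s,t$ in the relevant places of the final bound. The Eisenstein piece is dominated by the convex bound for $L(z,\psi)$ with $\psi$ a real character mod $24$, and is absorbed by the cuspidal estimate.

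To conclude, one applies a spectral large sieve for weight $1/2$ Maass forms evaluated at $(s,t)$ rather than $(m,n)$; such an inequality can be obtained either by dualising the Kuznetsov formula at these arguments or by transferring a Deshouillers--Iwaniec large sieve from $\Gamma_0(N)$ back via the Shimura lift. Splitting the spectrum by the size of the spectral parameter $t_j$ then produces the three nontrivial terms: the diagonal range contributes $(st)^{1/4}$, the transition range $|t_j|\asymp \sqrt{mn}/c$ contributes $(st)^{1/12}(mn)^{1/6}$, and the exceptional eigenvalues $|t_j|\leq 1/2$, estimated via $H_{\theta}$, contribute $(mnst)^{1/8+\theta/4}$. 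The main technical obstacle will be making the Shimura--Niwa correspondence completely explicit in the weight $1/2$ eta-multiplier setting, including precise control of the level $N$ and of the local factors relating $\rho_j(m)$ to $\widetilde{\rho}_j(s)$, so that the large sieve on $\Gamma_0(N)$ for the lifted forms transfers cleanly back to bounds for $\sum_j |\rho_j(m)|^2$ on the weight $1/2$ side; a secondary difficulty is the bookkeeping of the divisor sums over $d\mid m_0 n_0$ together with the dyadic smoothing, which must be tracked carefully to produce the stated $\log^3 x$ uniformly in $m$ and $n$.
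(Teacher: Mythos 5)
Your treatment of the Maass spectrum is essentially the paper's: the Ahlgren--Andersen Shimura-type lift converts $\rho_j(m)$ into the coefficient at the squarefree-part index times a divisor sum of Hecke eigenvalues $\lambda_j(d)$, $d\mid m_0$, and the $H_\theta$-hypothesis gives the factor $m_0^{\theta}$; combining this with a mean value estimate (the paper uses the Andersen--Duke bound, which is exactly your "dualised Kuznetsov at the squarefree arguments") and a dyadic decomposition in the spectral parameter yields the $(mnst)^{1/8+\theta/4}$ term. But there is a genuine gap: for $m,n>0$ the Proskurin--Kuznetsov formula contains, besides the Maass cusp forms, a contribution $\mathcal{U}$ from holomorphic cusp forms of half-integral weight, and your proposal never mentions it. In the paper this term is not negligible: it is bounded in Lemma \ref{Ubd} via the half-integral weight Petersson formula, the \emph{holomorphic} Shimura correspondence (Lemma \ref{shim}) and Deligne's bound, giving $\mathcal{U}\ll_\varepsilon (st)^{1/4+\varepsilon}\bigl(1+(mn)^{1/2}/x\bigr)$, and it is precisely this estimate (together with the initial segment $c\leq (st)^{1/6}(mn)^{1/3}$ handled by the Weil-type bound) that produces the terms $(st)^{1/4}$ and $(st)^{1/12}(mn)^{1/6}$ in the statement. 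Your proposal instead attributes these two terms to "diagonal" and "transition" ranges of a large sieve over the Maass spectrum, which is not substantiated and does not reflect where they actually come from; without a separate argument for the holomorphic part the proof cannot close.

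Two further inaccuracies in the spectral picture: for $(k,\nu)=(1/2,\chi)$ on the full modular group the single cusp is non-singular, so there is \emph{no} Eisenstein series and no continuous spectrum at all (your convexity bound for an Eisenstein piece addresses something that is absent), and by the Ahlgren--Andersen spectral gap either $r_j=i/4$ (the $\eta$-form, eliminated by the hypothesis on $m,n$) or $r_j>1.9$, so there are no exceptional eigenvalues. Consequently the $(mnst)^{1/8+\theta/4}$ term cannot come from "exceptional eigenvalues estimated via $H_\theta$": $H_\theta$ concerns Hecke eigenvalues, and in the paper this term arises by interpolating, via $\min(B,C)\leq\sqrt{BC}$, between the mean value bound evaluated at $m,n$ and the lifted bound evaluated at the squarefree parts carrying $(m_0n_0)^{\theta}$, uniformly over the dyadic ranges of the spectral parameter.
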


The proofs of Theorems \ref{mainthm}--\ref{auxthm2} depend on generalisations of Kuznetsov's trace formula due to Proskurin \cite{P} and Ahlgren--Andersen \cite{AA}. These are given in Sections \ref{proskurin} and \ref{AAkuzsec}. This formula transfers the task at hand to that of establishing bounds for sums involving the coefficients of half integral weight holomorphic and Maass cusp forms.   

To bound the contribution from holomorphic forms we appeal to Petersson's formula. We also use the Shimura lift for half integer weight forms and Deligne's bound to obtain bounds in terms of the factorisation of $24m-23$ and $24n-23$. Details can be found in Sections \ref{heckeholom} and \ref{holbd}. 

To bound the contribution from the Maass cusp forms we modify a dyadic argument in the spectral parameter that appears in \cite{AA,ST}. Our new treatment involves estimating an initial segment for the spectral parameter using an averaged bound of Duke \cite{D}. We will also make use of a mean value estimate for the coefficients of Maass cusp forms due to Andersen and Duke \cite{And}. These bounds can be found in Section \ref{massbd}. A Shimura lift for half integral weight Maass cusp forms developed by Ahlgren and Andersen is applied to obtain the dependence on $\theta$ in Theorem \ref{auxthm2}. This lift appears in Section \ref{heckemass}. 

The proofs of Theorems \ref{mainthm}--\ref{auxthm2} can be found in Sections \ref{mainsec}--\ref{auxsec} respectively. We follow the methods and presentation in \cite{AA}.

\section{Preliminaries} \label{pre}
We give only a concise background related to congruence subgroups. More details can be found in \cite{AA} and \cite{DFI} for example. See also \cite[Section~2]{AD}. Let $\mathbb{H}$ denote the upper-half plane. We have the usual action of $\text{SL}_2(\mathbb{R})$ on $\mathbb{H}$ given by
\begin{equation*}
\gamma \tau =\frac{a \tau +b}{c \tau +d}, \quad \text{for} \quad \tau \in \mathbb{H} \quad \text{and} \quad \gamma= \begin{pmatrix}
 a & b \\
 c & d
 \end{pmatrix} \in \text{SL}_2(\mathbb{R}).
\end{equation*}
For $\gamma \in \text{SL}_2(\mathbb{R})$ we define the weight $k$ slash operator for real analytic forms by
\begin{equation*}
f \vert_k \gamma:=j(\gamma,z)^{-k} f(\gamma z), \quad j(\gamma,z):=\frac{cz+d}{|cz+d|}=e^{i \text{arg}(cz+d)},
\end{equation*}
where the argument is always chosen in $(-\pi,\pi]$. The weight $k$ Laplacian is defined by 
\begin{equation*}
\Delta_k:=y^2  \Big(\frac{\partial^2}{\partial x^2}+\frac{\partial^2}{\partial y^2}  \Big)-iky \frac{\partial }{\partial x}.
\end{equation*}

For simplicity we will work only with the groups  $\Gamma_0(N)$ for $N \in \N$ and with weights $k \in \frac{1}{2} \mathbb{Z}$,
although much of what is said here holds in more generality. Let $\Gamma$ denote such a group.
 We say that $\nu: \Gamma \rightarrow \C^{\times}$ is a multiplier system of weight $k$ if 
\begin{itemize}
\item $|\nu|=1$
\item $\nu(-I)=e^{-\pi i k}$, and 
\item $\nu(\gamma_1 \gamma_2) j(\gamma_1 \gamma_2,\tau)^k=\nu(\gamma_1) \nu(\gamma_2) j(\gamma_2,\tau)^k j(\gamma_1, \gamma_2 \tau)^k$ for all $\gamma_1,\gamma_2 \in \Gamma$.
\end{itemize}

Given a cusp $\mathfrak{a}$, let $\Gamma_{\mathfrak{a}}:=\{\gamma \in \Gamma: \gamma \mathfrak{a}=\mathfrak{a} \}$ denote the  
stabilizer in $\Gamma$ and let $\sigma_{\mathfrak{a}}$ denote the unique (up to translation on the right) matrix in 
$\text{SL}_2(\mathbb{R})$ satisfying $\sigma_{\mathfrak{a}} \infty=\mathfrak{a}$ and
 $\sigma_{\mathfrak{a}}^{-1} \Gamma_{\mathfrak{a}} \sigma_{\mathfrak{a}}=\Gamma_{\infty}$. 
 Define $\alpha_{\nu,\mathfrak{a}} \in [0,1)$ by the condition 
\begin{equation*}
\nu \left( \sigma_{\mathfrak{a}} \begin{pmatrix}
 1 & 1 \\
 0 & 1  
 \end{pmatrix}
 \sigma_{\mathfrak{a}}^{-1} \right)=e \left({-\alpha_{\nu,\mathfrak{a}}} \right).
\end{equation*}
The cusp $\mathfrak{a}$ is singular with respect to  $\nu$ if $\alpha_{\nu,\mathfrak{a}}=0$.
When $\mathfrak a=\infty$ we suppress the subscript.

If $\nu$ is multiplier of weight $k$, then it is a multiplier in any weight $k^{\prime} \equiv k \pmod{2}$, and 
 $\bar \nu$ is a multiplier of weight $-k$. 
If $\alpha_{\nu}=0$ then  $\alpha_{\bar{\nu}}=0$, while 
 if $\alpha_{\nu}>0$ then $\alpha_{\bar{\nu}}=1-\alpha_{\nu}$.
For $n \in \Z$ we define
\begin{equation*}
n_\nu:=n-\alpha_{\nu};
\end{equation*}
then we have 
\begin{equation} \label{eq:n_nu_conj}
n_{\bar{\nu}}=\begin{cases} 
			-(1-n)_\nu\quad& \text{if $\alpha_\nu \neq 0$},\\
			n\quad&\text{if $\alpha_\nu= 0$}.
		\end{cases}
\end{equation}	

With this notation we  define the generalized Kloosterman sum (at the cusp $\infty$) by
\begin{equation}\label{eq:kloos_def}
	S(m,n,c,\nu) := \sum_{\substack{0\leq a,d<c \\ \gamma= \left(\begin{smallmatrix} 
	 a & b \\
	 c & d 
	 \end{smallmatrix} \right)
	 \in \Gamma}} \bar\nu(\gamma) e \left(\frac{m_\nu a+n_\nu d}{c} \right).
\end{equation}
We have the relationships
\begin{equation}\label{eq:nuconj}
\overline{S(m,n,c,\nu)}=
	\begin{cases}
	S(1-m, 1-n, c, \overline\nu)&\quad\text{if $\alpha_{\nu}>0$,}\\
	S(-m, -n, c, \overline\nu)&\quad \text{if $\alpha_{\nu}=0$.}
	\end{cases}
\end{equation}	

A function $f:\mathbb{H} \rightarrow \mathbb{C}$ is automorphic of weight $k$ and multiplier $\nu$ for $\Gamma_0(N)$ if 
\begin{equation*}
f \vert_k \gamma =\nu(\gamma)  f, \quad \text{for all} \quad \gamma \in \Gamma_0(N).
\end{equation*}
Let $\mathcal{A}_k(N,\nu)$ denote the space of such functions. If $f \in \mathcal{A}_k(N,\nu)$ is a smooth eigenfunction of $\Delta_k$ which satisfies the growth condition
\begin{equation*}
f(\tau) \ll y^{\sigma}+y^{1-\sigma}, 
\end{equation*}
for some $\sigma$ and all $\tau \in \mathbb{H}$, then it is called a Maass form. Let 
\begin{equation*}
\mathcal{L}_k (N,\nu):=\{f \in \mathcal{A}_k(N,\nu): \| f\| < \infty \},
\end{equation*}
where the norm is induced by the Petersson inner product 
\begin{equation*}
\langle f,g \rangle:=\int_{\mathbb{H} \backslash \Gamma_0(N)} f(\tau) \overline{g(\tau)} d \mu, \quad d \mu:=\frac{dx dy}{y^2}.
\end{equation*} 
Let $\mathcal{B}_k(N, \nu)$ denote the subspace of $\mathcal{L}_k(N, \nu)$ consisting of smooth functions $f$ such that $f$ and $\Delta_k f$ are bounded on $\mathbb{H}$. For all $f,g \in \mathcal{B}_k(N,\nu)$ we have 
\begin{equation*}
\langle \Delta_k f,g \rangle=\langle f, \Delta_k g \rangle. 
\end{equation*}
Furthermore, for any $f \in \mathcal{B}_k(N,\nu)$ we have 
\begin{equation*}
\langle f,-\Delta_k f \rangle \geq \frac{|k|}{2} \left(1-\frac{|k|}{2} \right) \geq 0.
\end{equation*}
Thus by a theorem of Friedrichs, the operator $-\Delta_k$  has a unique self-adjoint extension to $\mathcal{L}_k(N, \nu)$ (which we also denote $-\Delta_k$). Then by a theorem of von Neumann, the space $\mathcal{L}_k(N,\nu)$ has a complete spectral resolution with respect $-\Delta_k$, which we describe in detail now. There is both a continuous and discrete spectrum. For each singular cusp $\mathfrak a$ (and only at such cusps) 
there is an Eisenstein series $E_{\mathfrak a}(z, s)$.  These provide the continuous spectrum on the line $\Re s=\frac{1}{2}$, which covers  $[1/4, \infty)$. 

The reminder of the spectrum is discrete. It is countable and of finite multiplicity (with $\infty$ being the only limit point). We denote it by
\begin{equation*}
\lambda_0\leq \lambda_1\leq \dots
\end{equation*}
where we have
\begin{equation*}
\lambda_0\geq \frac{|k|}{2} \left(1-\frac{|k|}{2} \right).
\end{equation*}
One component of the discrete spectrum is provided by residues of the Eisenstein series 
$E_{\mathfrak a}(z,s)$ at possible simple poles $s$ with $\frac12<s\leq 1$;
the corresponding  eigenvalues  have $\lambda<\frac{1}{4}$. The remainder of the discrete spectrum
arises from \emph{Maass cusp forms}. We give more details about the discrete spectrum in what follows.

Denote by $\tilde{\mathcal{L}}_k(N, \nu)$ the subspace of $\mathcal{L}_k(N, \nu)$ spanned by the eigenfunctions of $\Delta_k$. If $f \in \tilde{\mathcal{L}}_k(N, \nu)$ has Laplace eigenvalue $\lambda$, then we write (here $0 \leq k<2$)
\begin{equation*}
\lambda=\frac{1}{4}+r^2,\qquad r\in  i \Big(0, \sqrt{1/4-(|k|/2) \big(1-|k|/2 \big)} \Big] \cup [0, \infty),
\end{equation*}
and refer to $r$ as the spectral parameter of $f$.  Denote by $\tilde{\mathcal{L}}_k(N, \nu, r)$  the subspace of such functions. Let $W_{\kappa, \mu}$ denote the usual $W$-Whittaker function (cf. \cite[Section~13.14]{DL}). Then each  $f \in \tilde{\mathcal{L}}_k(N, \nu, r)$ has a Fourier expansion of the form
\begin{equation}\label{eq:f_fourier}
f(\tau)=c_{0}(y) +  \sum_{n_\nu\neq 0} \rho(n) W_{\frac{k \text{sgn}(n_{\nu})}{2}, ir}(4\pi |n_\nu| y)e(n_\nu x),
\end{equation}
where
\begin{equation*}
c_{0}(y)=\begin{cases} 0\quad&\text{if $\alpha_\nu\neq 0$},\\
				      0\quad&\text{if $\alpha_\nu=0$ and $r\geq 0$,}\\
				      \rho(0)y^{\frac12+i r}&\text{if $\alpha_\nu=0$ and $r\in i(0, 1/4]$,}
		\end{cases}
\end{equation*}			      	
with coefficients $\rho(n)$. Note that in the last case, we have $\rho(0) \neq 0$ only when $f$ arises as a residue. Let $\mathcal{S}_k(N,\nu) \subseteq \tilde{\mathcal{L}}_k(N, \nu)$ be the subspace spanned by the Maass cusp forms (i.e. $c_0(y)=0$).

Two important multipliers of weight $\frac{1}{2}$ are 
 the  eta-multiplier $\chi$ on $\text{SL}_2(\mathbb{Z})$, given by 
\begin{equation}\label{eq:etamult}
\eta(\gamma\tau)=\chi(\gamma)\sqrt{c\tau+d}\,\eta(\tau), \qquad \gamma=\begin{pmatrix}
 a & b \\
 c & d
 \end{pmatrix} \in \text{SL}_2(\Z),
\end{equation}
and the theta-multiplier $\nu_\theta$ on $\Gamma_0(4)$, given by 
\begin{equation}\label{eq:thetamult}
\theta(\gamma\tau)=\nu_\theta(\gamma)\sqrt{c\tau+d}\,\theta(\tau), \qquad \gamma=\begin{pmatrix}
 a & b \\
 c & d
 \end{pmatrix} \in \Gamma_0(4).
\end{equation}
Here $\eta(\tau)$ and $\theta(\tau)$ are the two fundamental theta functions
\[\begin{aligned}
\eta(\tau)&:=q^\frac1{24}\prod_{n=1}^\infty(1-q^n),\\
\theta(\tau)&:=\sum_{n=-\infty}^\infty q^{n^2},
\end{aligned}
\]
where we use the standard notation
\begin{equation*}
q:=e(\tau)=e^{2\pi i\tau}.
\end{equation*}

For $\nu_\theta$ we have the formula
\begin{equation} \label{eq:def-theta-mult}
	\nu_\theta \begin{pmatrix}
	 a & b \\
         c & d
         \end{pmatrix} = \left( \frac{c}{d} \right) \epsilon_d^{-1},
\end{equation}
where $\left(\frac{\bullet}{\bullet} \right)$ is the extended  Kronecker symbol and 
\begin{equation*}
	\epsilon_d =
	\begin{cases}
		1 & \text{ if }d\equiv 1\pmod{4}, \\
		i & \text{ if }d\equiv 3 \pmod{4}.
	\end{cases}
\end{equation*}
From this we obtain
\begin{equation}\label{eq:thetaconj}
\bar{\nu_\theta}(\gamma)=\left( \frac{-1}{d} \right) \nu_\theta(\gamma), \qquad \gamma=\begin{pmatrix}
 a & b \\
 c & d
 \end{pmatrix} \in \Gamma_0(4).
\end{equation}

For  $c>0$ and $\gamma=\left(\begin{smallmatrix}
a & b \\
c & d
\end{smallmatrix} \right) \in \text{SL}_2(\mathbb{Z})$, we have another convenient formula  \cite[Section~4.1]{Kno}
\begin{equation} \label{kron}
	\chi(\gamma) = 
	\begin{dcases}
		\left( \frac{d}{c} \right)  e \left( \frac{1}{24} \left[(a+d)c-bd(c^2-1)-3c\right] \right) & \text{ if $c$ is odd}, \\
		\left(\frac{c}{d} \right)  e \left(\frac {1}{24} \left[(a+d)c-bd(c^2-1)+3d-3-3cd\right] \right) & \text{ if $c$ is even.}
	\end{dcases}
\end{equation}
We have $\chi \left( \begin{smallmatrix}
1 & b \\
0 & 1
\end{smallmatrix} \right)=e(\frac{b}{24})$. Finally, if $c>0$ we have $\chi(-\gamma)=i \chi(\gamma)$ (this follows since $\gamma$ and $-\gamma$ act the same way on $\mathbb{H}$).

When $(k,\nu)=(1/2,\chi)$ and we work on the full modular group $\Gamma$, there is neither continuous spectrum nor discrete spectrum arising from Eisenstein series because the only cusp is non-singular. This follows from the evaluation
\begin{equation*}
\chi \left( \begin{pmatrix}
1 & 1 \\
0 & 1
\end{pmatrix} \right)=e  \left( \frac{1}{24} \right).
\end{equation*}
Thus $\tilde{\mathcal{L}}_{\frac{1}{2}}(1,\chi)$ is spanned by Maass cusp forms. We fix an orthonormal basis $\{u_j\}$ with corresponding spectral parameters $r_j$ and with Fourier series given by 
\begin{equation} \label{fouriercusp}
u_j(\tau)=\sum_{n \neq 0} \rho_j(n) W_{\frac{\text{sgn}(n)}{4},ir_j}(4 \pi | \tilde{n}| y) e(\tilde{n} x),
 \end{equation}
 where 
 \begin{equation*}
 \tilde{n}:=n_{\chi}=n-\frac{23}{24}.
 \end{equation*}

\section{Hecke theory for holomorphic cusp forms} \label{heckeholom}
We briefly review Hecke theory for holomorphic cusp forms of half integral weight. For $N \in \mathbb{N}$ and $k \in 2 \mathbb{N}$, let $S_{\frac{1}{2}+k}(4N,\nu)$ denote the space of holomorphic cusp forms of weight $1/2+k$ on $\Gamma_0(4N)$ with a multiplier $\nu$ of weight $\frac{1}{2}$. Let $\Psi$ denote an even Dirichlet character mod $4N$. For all primes $p \nmid 4N$, the action of the Hecke operator $T_{p^2}$  on 
\begin{equation*}
f:=\sum_{r=1}^{\infty} a(r) e(r \tau)\in S_{\frac{1}{2}+k}(4N,\Psi \nu_{\theta}) 
\end{equation*}
is given in \cite[Theorem~1.7]{Sh} by
\begin{equation} \label{coeffrel}
T_{p^2}(f)(z):=\sum_{n=1}^{\infty} \Big( a_f(p^2 n)+\Psi^*(p) \Big( \frac{n}{p} \Big) p^{k-1} a_f(n)+ \Psi(p^2) p^{2k-1} a_f \big(n/p^2 \big) \Big) e(n \tau) \in S_{\frac{1}{2}+k}(4N,\Psi \nu_{\theta}),
\end{equation}
where $\Psi^*$ is the character modulo $4N$ defined by $\Psi^*(m)=\Psi(m) \left( \frac{-1}{m} \right)^k$ and $a(n/p^2)=0$ if $p^2 \nmid n$.

There are Hecke operators $T_{n^2}$ for all integers $n$ such that $(n,4N)=1$.  For $v \in \mathbb{N}$, the operators $T_{p^{2v}}$ are polynomials in the $T_{p^2}$. If $(nm,4N)=1$ and $(n,m)=1$ then
\begin{equation} \label{mult}
T_{n^2} T_{m^2}=T_{n^2 m^2}.
\end{equation}

Recall that $\chi$ is the Dedekind eta multiplier defined in \eqref{eq:etamult}. Letting $\chi_{12}:=\Big(\frac{12}{\bullet} \Big)$, we have the map 
\begin{equation} \label{image}
L: S_{\frac{1}{2}+k}(1,\chi) \rightarrow S_{\frac{1}{2}+k}(576,\chi_{12} \nu_{\theta}), \quad L(f)(\tau):=f(24 \tau).
\end{equation}
We justify the target space in \eqref{image} with the following computation.  Let $\Gamma_0(M,N)$ denote the subgroup of $\Gamma_0(N)$ consisting of matrices whose upper right entry is divisible by $M$. Equation \eqref{kron} and the remark following it show that 
\begin{equation*}
\chi(\gamma)=\left( \frac{c}{d} \right) e \left( \frac{d-1}{8} \right) \quad \text{for} \quad \gamma \in \Gamma_0(24,24).
\end{equation*}
This implies that 
\begin{equation*}
\chi \left( \begin{pmatrix}
a & 24b \\
c/24 &d 
\end{pmatrix} \right)=\left( \frac{12}{d} \right) \left( \frac{c}{d} \right) \epsilon_d^{-1} \quad \text{for} \quad \gamma=\begin{pmatrix}
a & b \\
c &d 
\end{pmatrix} \in \Gamma_0(576).
\end{equation*}

For primes $p \nmid 6$, we can define Hecke operators  $\tilde{T}_{p^2}$ on $S_{\frac{1}{2}+k}(1,\chi)$. Let 
\begin{equation*}
f(\tau):=\sum_{n=1}^{\infty} a(n) e \left( \bigg(n-\frac{23}{24} \bigg) \tau \right) \in S_{\frac{1}{2}+k}(1,\chi).
\end{equation*}
Then we can define the action of $\tilde{T}_{p^2}$ on $S_{\frac{1}{2}+k}(1,\chi)$ by
\begin{equation*}
\tilde{T}_{p^2} f=\sum_{n=1}^{\infty} \Big( a_f(p^2 n)+\Psi^*(p) \Big( \frac{n}{p} \Big) p^{k-1} a_f(n)+\Psi(p^2) p^{2k-1} a_f \big(n/p^2 \big) \Big) e \left( \bigg(n-\frac{23}{24} \bigg)  \tau \right).
\end{equation*}
Observe that 
\begin{equation} \label{commute}
L(\tilde{T}_{p^2} f )=T_{p^2}(Lf).
\end{equation}

We recall the Shimura correspondence for half-integral weight holomorphic cusp forms. 
\begin{lemma} \cite[Main Theorem]{Sh} and \cite[Proposition~5.1]{Cip}  \label{shim}
Let $N,k \in \mathbb{N}$ and $\Psi$ be a character modulo $4N$. Suppose that $g(\tau):=\sum_{n=1}^{\infty} a(n) e(n \tau) \in S_{k+\frac{1}{2}}(4N,\Psi \nu_{\theta})$. Let $t$ be a positive square-free integer, and define the Dirichlet character $\Psi_t$ by $\Psi_t(n):=\Psi(n) \big(\frac{-1}{n} \big)^{k} \big( \frac{t}{n} \big)$. Define $b_t(n) \in \mathbb{C}$ by 
\begin{equation*}
\sum_{n=1}^{\infty} \frac{b_t(n)}{n^s}:=L(s-k+1,\Psi_t) \sum_{n=1}^{\infty} \frac{a(tn^2)}{n^s}.
\end{equation*}
Then
\begin{equation*}
\emph{Sh}_t(g):=\sum_{n=1}^{\infty} b_t(n) e(n \tau) \in M_{2 k}(2N,\Psi^2).
\end{equation*}
Moreover, if $k \geq 2$, then $\emph{Sh}_t(g)$ is a cusp form. For all primes $p \nmid 4N$ and squarefree $t$, we have  
\begin{equation*}
\emph{Sh}_t \big(T_{p^2} g \big)=T_p \big( \emph{Sh}_t(g) \big),
\end{equation*}
where $T_p$ denotes the usual Hecke operator on $M_{2 k}(2N,\Psi^2)$.
\end{lemma}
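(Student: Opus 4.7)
The result is essentially the classical Shimura correspondence, and my plan is to reassemble the three ingredients that prove it as presented in \cite{Sh} and refined in \cite{Cip}.

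First I would analyse the Euler product structure of the Dirichlet series $D_t(s) := \sum_{n \geq 1} b_t(n) n^{-s}$. Iterating \eqref{coeffrel} on a prime $p \nmid 4N$ produces a three-term recurrence in $a(tp^{2v})$ which, upon forming the generating series $\sum_v a(tp^{2v}) p^{-vs}$, yields a rational function in $p^{-s}$. A direct computation shows that this rational function, multiplied by the local Euler factor $(1 - \Psi_t(p) p^{k-1-s})^{-1}$ coming from $L(s-k+1,\Psi_t)$, simplifies to the weight-$2k$ Hecke polynomial $(1 - b_t(p) p^{-s} + \Psi^2(p) p^{2k-1-2s})^{-1}$. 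This forces $D_t(s)$ to admit the correct Euler product and identifies the Hecke eigenvalue $b_t(p)$ at good primes.

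Second, to obtain modularity of $\mathrm{Sh}_t(g)$ on $\Gamma_0(2N)$ with nebentypus $\Psi^2$, I would invoke Weil's converse theorem. This requires, for every primitive Dirichlet character $\omega$ whose conductor is coprime to $4N$, the analytic continuation and appropriate functional equation of the twisted Dirichlet series $\sum b_t(n) \omega(n) n^{-s}$. Shimura's central observation is that these twisted series arise (up to known factors) by Rankin--Selberg unfolding of $g$ against a theta series built from $\omega$ and $t$; the analytic properties of the corresponding Eisenstein series then supply the continuation and functional equation with the expected root number. Careful bookkeeping of the local factors at primes dividing $4N$, together with the twist by the theta multiplier, reduces the level from the naive $4N$ down to $2N$, as established in \cite[Proposition~5.1]{Cip}.

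Third, the Hecke compatibility $\mathrm{Sh}_t(T_{p^2} g) = T_p \mathrm{Sh}_t(g)$ for $p \nmid 4N$ is a termwise coefficient comparison: \eqref{coeffrel} gives a recursion for the coefficients of $T_{p^2} g$, and applying the definition of the Shimura lift converts it precisely into the standard weight-$2k$ Hecke relation on $\mathrm{Sh}_t(g)$. The main obstacle is the cuspidality claim when $k \geq 2$: one must verify that $\mathrm{Sh}_t(g)$ has no constant term at any cusp of $\Gamma_0(2N)$. This follows by analysing the Rankin--Selberg integral at the spectral point $s = k$; when $k \geq 2$ the relevant Eisenstein series is holomorphic there, so no residual Eisenstein-type contribution to $\mathrm{Sh}_t(g)$ can appear, and the lift lies in $S_{2k}(2N,\Psi^2)$ as claimed.
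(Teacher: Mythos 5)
The paper does not actually prove this lemma: it is quoted directly from Shimura's Main Theorem together with Cipra's Proposition~5.1, so there is no internal argument to compare yours against. Your sketch is an outline of the classical proof and, in broad strokes, it follows Shimura's original route (Rankin--Selberg unfolding of $g$ against theta series to get twisted functional equations, Weil's converse theorem, and a termwise check of the Hecke compatibility). Two of your steps, however, do not work as described.

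First, the level $2N$ is not obtained by ``careful bookkeeping of local factors'' inside the converse-theorem argument. Shimura's converse-theorem proof yields modularity only at a substantially larger level; the reduction to $2N$ --- and in particular the treatment of the multiplier $\Psi\nu_\theta$ that is needed in this paper --- is exactly the content of the Niwa--Shintani theta-kernel construction as carried out in Cipra's Proposition~5.1, which is a genuinely different method. As written, your second step assumes the very statement being cited rather than proving it. Second, your cuspidality argument for $k\geq 2$ (``the Eisenstein series is holomorphic at $s=k$, so no residual contribution appears'') is not a proof: holomorphy of an Eisenstein series at one point does not preclude an Eisenstein component in the lift. The standard argument is elementary: the trivial Hecke bound $a(n)\ll_{\varepsilon} n^{k/2+1/4+\varepsilon}$ gives $b_t(n)\ll_{t,\varepsilon} n^{k+1/2+\varepsilon}$, which is $o\big(n^{2k-1}\big)$ precisely when $k\geq 2$; since any nonzero Eisenstein component of a form in $M_{2k}(2N,\Psi^2)$ has coefficients of true order $n^{2k-1}$, that component must vanish. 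A minor further point: your first step (the Euler-product computation via \eqref{coeffrel}) presumes $g$ is a Hecke eigenform, which the lemma does not assume; it is also unnecessary, since Weil's converse theorem needs only the twisted functional equations, and the Rankin--Selberg unfolding supplies these for arbitrary $g$.
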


\section{Hecke theory for Maass cusp forms} \label{heckemass}
We discuss Hecke theory for the spaces $\mathcal{S}_{0}(N,\mathbf{1})$ and $\mathcal{S}_{\frac{1}{2}}(1,\chi)$. For $(n,N)=1$, the Hecke operator on $\mathcal{S}_{0}(N,\mathbf{1})$ can be defined as 
\begin{equation*}
(\mathcal{T}_n f)(\tau)=\frac{1}{\sqrt{n}} \sum_{ad=n} \hspace{0.1cm} \sum_{b \hspace{-0.2cm} \mod{d}} f \left( \frac{a \tau+b}{d} \right) \in \mathcal{S}_0(N,\mathbf{1}).
\end{equation*}
We have 
\begin{equation*}
\mathcal{T}_m \mathcal{T}_n=\sum_{d \mid (m,n)} \mathcal{T}_{mnd^{-2}}.
\end{equation*}
We now record the explicit action of the Hecke operators on Fourier coefficients. If $f \in \mathcal{S}_{0}(N,\mathbf{1}) \cap \tilde{\mathcal{L}}_{0}(N,\mathbf{1},r)$ is a Maass cusp form with Fourier expansion 
\begin{equation}
f(\tau)=\sum_{n \neq 0} \rho(n) W_{0, ir}(4\pi |n | y)e(n x),
\end{equation} 
then 
\begin{equation*}
\mathcal{T}_p f=\sum_{n \neq 0} \left (p^{\frac{1}{2}} \rho(pn)+p^{-\frac{1}{2}} \rho \left(\frac{n}{p} \right) \right) W_{0, ir}(4\pi |n | y)e(n x).
\end{equation*}

Note that $\mathcal{T}_n$ commutes with $\Delta_0$, so $\mathcal{T}_n$ is an endomorphism of $\mathcal{S}_0(N,\mathbf{1}) \cap \tilde{\mathcal{L}}_0(N,\mathbf{1},r)$. Furthermore, for all $(n,N)=1$ we have 
\begin{equation*}
\langle \mathcal{T}_n f, g \rangle= \langle f, \mathcal{T}_n g \rangle.
\end{equation*}
Thus we can produce an orthonormal basis $\{v_j\}$ (each having spectral parameter $r_j$) for $\mathcal{S}_0(N,\mathbf{1})$ that consists of Hecke eigenforms for all $\mathcal{T}_n$ with $n$ coprime to $N$. Suppose that each $v_j$ has Fourier coefficients $\tilde{\rho}_j(n)$. The $H_{\theta}$--hypothesis asserts that 
\begin{equation*}
\lambda_j(n) \ll_{\varepsilon} n^{\theta+\varepsilon},
\end{equation*}
where the $\lambda_j(n)$ are the Hecke--Maass eigenvalues defined by
\begin{equation*}
\mathcal{T}_n v_j= \lambda_j(n) v_j.
\end{equation*}
The Ramanujan--Petersson conjecture asserts that $H_0$ is true. The best known result is due to Kim and Sarnak \cite[Appendix 2]{Ki}, who showed that the exponent $\theta=7/64$ is available.  
Applying $\mathcal{T}_{n}$ to the Fourier expansion of $v_j$ we see that \cite[(6.14),(6.15)]{DFI}
\begin{equation} \label{coeff}
\tilde{\rho}_j(n)=\lambda_j(|n|) \tilde{\rho}_j \big( \text{sgn}(n) \big) |n|^{-1/2}. 
\end{equation}

The Hecke operators $\mathcal{T}_{p^2}$ for $p \nmid 6$ are defined on $\mathcal{S}_{\frac{1}{2}}(1,\chi)$ \cite[Section~2.6]{AA} by
\begin{equation*}
T_{p^2} f=\frac{1}{p} \bigg[ \sum_{b \mod p^2} e \Big( \frac{-b}{24}  \Big) f  \vert_{\frac{1}{2}} \begin{pmatrix}
\frac{1}{p} & \frac{b}{p} \\
0 & p 
\end{pmatrix} + e \Big( \frac{p-1}{8} \Big) \sum_{h=1}^{p-1} e \Big(\frac{-hp}{24}  \Big) \Big( \frac{h}{p} \Big) f  \vert_{\frac{1}{2}} \begin{pmatrix}
 1 & \frac{h}{p} \\
0 & 1 
\end{pmatrix}  +f  \vert_{\frac{1}{2}} \begin{pmatrix}
 p & 0 \\
0 & \frac{1}{p} 
\end{pmatrix}   \bigg].
\end{equation*}
Each $\mathcal{T}_{p^2}$ commutes with $\Delta_{\frac{1}{2}}$, so $\mathcal{T}_{p^2}$ is an endomorphism of $\tilde{\mathcal{L}}_{\frac{1}{2}}(1,\chi,r)$. The analogous discussion above guarantees that there exists an orthonormal basis of $\tilde{\mathcal{L}}_{\frac{1}{2}}(1,\chi)$ consisting of Hecke eigenforms.

Ahlgren and Andersen \cite{AA} developed a Shimura type correspondence between Maass cusp forms of weight $1/2$ on $\Gamma_0(N)$ with the eta multiplier twisted by a Dirichlet character and Maass cusp forms of weight $0$. Here we provide details only in the simplest case. In this setting it is most convenient to write the expansion of $f \in \tilde{\mathcal{L}}_{\frac{1}{2}}(1,\chi,r)$ in the form 
\begin{equation} \label{normalise}
f(\tau)=\sum_{n \neq 0} a(n) W_{\frac{\text{sgn}(n)}{4},ir} \Big( \frac{\pi |n| y}{6} \Big) e \Big( \frac{nx}{24} \Big).
\end{equation}

\begin{theorem} \cite[Theorem~5.1]{AA} \label{thelif}
Suppose that $G \in \tilde{\mathcal{L}}_{\frac{1}{2}}(1,\chi,r)$ with $r \neq i/4$ and Fourier expansion given by \eqref{normalise}. Let $t \equiv 1 \pmod{24}$ be a square-free positive integer and define $b_t(n) \in \mathbb{C}$ by the relation
\begin{equation} \label{shimrel}
\sum_{n=1}^{\infty} \frac{b_t(n)}{n^s}=L\Big (s+1, \Big( \frac{t}{\bullet} \Big)   \Big) \sum_{n=1}^{\infty} \Big( \frac{12}{n} \Big) \frac{a(tn^2)}{n^{s-\frac{1}{2}}}. 
\end{equation}
Then the function $S_t(G)$ defined by
 \begin{equation*}
(S_t G)(\tau):=\sum_{n=1}^{\infty} b_t(n) W_{0,2ir}(4 \pi n y) \cos( 2 \pi n x)
\end{equation*}
is a Maass cusp form in $\tilde{\mathcal{L}}_0(6,\mathbf{1},2r)$. For any prime $p \geq 5$ we have
 \begin{equation*}
 \mathcal{T}_p S_t(G)= \Big(\frac{12}{p} \Big) S_t \big( \mathcal{T}_{p^2} G \big).
 \end{equation*} 
\end{theorem}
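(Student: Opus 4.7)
My strategy would be a Shimura-type theta lift of Niwa and Katok--Sarnak type, adapted to the half-integral weight Maass setting twisted by the eta multiplier. The central object is a theta kernel $\Theta_t(\tau,z)$ on $\mathbb{H}\times\mathbb{H}$, built from an indefinite quadratic form of signature $(2,1)$, designed so that: as a function of $\tau$ it is a weight $1/2$ form on $\text{SL}_2(\Z)$ with multiplier $\chi$ and a $\Delta_{1/2}$-eigenfunction with spectral parameter $r$; as a function of $z$ it is a weight $0$ function on $\Gamma_0(6)$ with trivial multiplier and a $\Delta_0$-eigenfunction with spectral parameter $2r$. The Fourier expansion of $\Theta_t$ in the $\tau$-variable is supported on indices of the form $tn^2$ with $(n,6)=1$, weighted by the Jacobi symbol $(12/n)$, which is precisely what is needed to produce the right hand side of \eqref{shimrel}.

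With $\Theta_t$ in hand, I would define
\[
(S_t G)(z):=\langle G(\cdot),\Theta_t(\cdot,z)\rangle,
\]
where the Petersson product is taken over $\text{SL}_2(\Z)\backslash\mathbb{H}$. The $\Gamma_0(6)$-invariance of $S_tG$ in $z$ and the eigenvalue $1/4+(2r)^2$ of $\Delta_0$ follow immediately from the corresponding properties of $\Theta_t$. The hypothesis $r\neq i/4$ keeps us away from the residual spectrum on the weight zero side, so that cuspidality of $G$ propagates to $S_tG$.

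Next, I would compute the Fourier expansion of $S_tG$ by unfolding: inserting \eqref{normalise} into the inner product and integrating term-by-term against the appropriate Fourier component of $\Theta_t$ produces a Whittaker integral that evaluates to a product of $W_{0,2ir}(4\pi n y)$ and an arithmetic factor. Collecting these factors across Fourier modes yields the Dirichlet convolution encoded in \eqref{shimrel}, with the $L$-factor $L(s+1,(t/\bullet))$ emerging from the sum over $(n,6)=1$. The purely $\cos(2\pi n x)$ structure reflects the built-in symmetry of $\Theta_t$ under $x\mapsto -x$, which in turn is forced by the evenness of $(12/n)$ in $n$.

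The Hecke equivariance $\mathcal{T}_p S_t(G)=(12/p)\,S_t(\mathcal{T}_{p^2}G)$ can then be verified directly on Fourier coefficients, using \eqref{coeff} on the weight $0$ side and the explicit action of $\mathcal{T}_{p^2}$ on the Fourier coefficients of $G$ on the weight $1/2$ side; the twist by $(12/p)$ is forced by the Jacobi symbol in \eqref{shimrel}. The main obstacle throughout is the construction and transformation analysis of $\Theta_t$: the explicit formula \eqref{kron} for $\chi$ involves $24$-th roots of unity, which is what ultimately forces both the congruence $t\equiv 1\pmod{24}$ and the target level $6$ in $\tilde{\mathcal{L}}_0(6,\mathbf{1},2r)$. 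Verifying this compatibility requires careful Gauss sum evaluations and bookkeeping of the $\epsilon_d$-factors from \eqref{eq:def-theta-mult} throughout the unfolding computation; once this is in place, the remainder of the proof reduces to standard manipulations of Whittaker transforms and Dirichlet series.
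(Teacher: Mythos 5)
This theorem is not proved in the present paper at all: it is quoted from Ahlgren--Andersen \cite[Theorem~5.1]{AA}, whose proof is precisely the Shintani--Niwa/Katok--Sarnak style theta lift you describe --- a signature $(2,1)$ theta kernel compatible with the eta multiplier, the lift realized as a Petersson pairing, the coefficient relation \eqref{shimrel} extracted by unfolding, and the Hecke identity checked on Fourier coefficients --- so your plan follows essentially the same route as the cited proof. The one caveat is that everything you defer (constructing $\Theta_t$, which in the standard construction intertwines $\Delta_{\frac{1}{2}}$ and $\Delta_0$ rather than being an eigenfunction of each with parameters $r$ and $2r$, together with the cuspidality check at the cusps of $\Gamma_0(6)$) is exactly where the substance of that proof lies.
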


\begin{remark} \label{specremark}
Using Theorem \ref{thelif}, Ahlgren and Andersen rule out the existence of exceptional eigenvalues in $\tilde{\mathcal{L}}_{\frac{1}{2}}(1,\chi)$. If $\tilde{\mathcal{L}}_{\frac{1}{2}}(1,\chi,r) \neq \{0\}$, then either $r=i/4$ or $r>1.9$. Note that $r_0=i/4$ corresponds to the minimal eigenvalue $\lambda_0=\frac{3}{16}$. This is achieved by the unique normalised cusp form 
\begin{equation*}
u_0(\tau):=\sqrt{\frac{3}{\pi}} (6y)^{\frac{1}{4}} \eta(\tau).
\end{equation*}
The Fourier coefficients $\rho_0(m)$ of $u_0$ are non-zero only when $m-1 \in \mathcal{P}$. See \cite[pg.~435]{AA2}.
\end{remark}

\section{Kuznetsov--Proskurin formula} \label{proskurin}
Here we develop some tools for the case $m,n>0$. Let $\phi \in C^{4} \big([0,\infty) \big)$ be such that 
\begin{equation} \label{phicond}
\phi(0)=\phi^{\prime}(0), \quad \phi(t) \ll_{\varepsilon} t^{-1-\varepsilon} \quad \text{and} \quad \phi^{(j)} \ll_{\varepsilon} t^{-2-\varepsilon} \quad \text{for} \quad j=1,2,3,4,
\end{equation}
as $t \rightarrow \infty$ for some fixed $\varepsilon>0$. We define the auxiliary integrals 
\begin{equation} \label{orig}
\check{\phi}(r):=\int_{0}^{\infty} J_{r-1}(y) \phi(y) \frac{dy}{y},
\end{equation}
and
\begin{equation} \label{hat}
\hat{\phi}(r):= \pi^2 e^{3 \pi i/4} \frac{\int_{0}^{\infty} \Big( \cos \pi \big(\frac{1}{4} +ir \big) J_{2ir}(y)-\cos \pi \big( \frac{1}{4}-ir \big) J_{-2 i r}(y) \Big) \phi(y) \frac{dy}{y} }{\text{sh}(\pi r)
\text{ch} (2 \pi r) \Gamma \big(\frac{1}{4}+ir \big) \Gamma \big(\frac{1}{4}-ir \big)},
\end{equation}
where $J_{\nu}$ for $\nu \in \mathbb{C}$ denotes the $J$--Bessel function \cite[Section~10.2]{DL}.

Using a trigonometric identity, we write the integrand occurring in $\hat{\phi}(r)$ in the more convenient form 
\begin{equation} \label{con}
\frac{1}{\sqrt{2}} \frac{\phi(y)}{y} \Big( \cos( \pi i r) \big(J_{2ir}(y)-J_{-2ir}(y) \big)-\sin( \pi i r) \big(J_{2ir}(y)+J_{-2ir}(y) \big)    \Big).
\end{equation}

Endow $S_{\frac{1}{2}+2l}(1,\chi)$ with the usual inner-product \cite[pg~2514]{DFI2}. For each integer $l \geq 1$, let $B_l$ denote an orthonormal basis for $S_{\frac{1}{2}+2l}(1,\chi)$ and 
\begin{equation*}
\mathcal{S}:=\bigcup_{l=1}^{\infty} B_l.
\end{equation*}
Suppose each $f \in \mathcal{S}$ has Fourier expansion given by
\begin{equation*}
f(\tau):=\sum_{n=1}^{\infty} a_f(n) e \big (\tilde{n} \tau \big),
\end{equation*}
and weight denoted by $w(f)$. Let $\{u_j\}$ be an orthonormal basis for $\tilde{\mathcal{L}}_{\frac{1}{2}}(1,\chi)$ with Fourier expansion given by \eqref{fouriercusp}. For $m,n>0$, Proskurin's formula \cite[p.~3888]{P} asserts that
\begin{equation} \label{PrKu}
\sum_{c \geq 1} \frac{S(m,n,c,\chi)}{c} \phi \Big(\frac{4 \pi \sqrt{\tilde{m} \tilde{n}}}{c}  \Big)=\mathcal{U}+\mathcal{V},
\end{equation}
where
\begin{align}
\mathcal{U}&:=\sum_{f \in \mathcal{S}} \frac{4 \Gamma \big(w(f) \big) e^{\pi i w(f) / 2}}{(4 \pi)^{w(f)} (\tilde{m} \tilde{n})^{(w(f)-1)/2} } \overline{a_f(m)} a_f(n) \check{\phi} \big(w(f) \big) \label{U}, \\
\mathcal{V}&:=4 \sqrt{\tilde{m} \tilde{n}} \sum_{j \geq 0} \frac{\overline{\rho_j(m)} \rho_j(n)}{\text{ch} (\pi r_j)} \hat{\phi}(r_j) \label{V}.
\end{align}

Given $a,x>0$, choose a parameter $T>0$ such that 
\begin{equation*}
T \leq x/3, \quad T \asymp x^{1-\delta} \quad \text{with} \quad 0<\delta<1/2.
\end{equation*}
Now we choose a smooth $\phi=\phi_{a,x,T}: [0,\infty) \rightarrow [0,1]$ satisfying 
\begin{itemize}
\item $\phi(t)=1$ for $\frac{a}{2x} \leq t \leq \frac{a}{x}$ 
\item $\phi(t)=0$ for $t \leq \frac{a}{2x+2T}$ and $t \geq \frac{a}{x-T}$
\item $\phi^{\prime}(t) \ll \big(\frac{a}{x-T}-\frac{a}{x} \big)^{-1} \ll  \frac{x^2}{aT}$
\item $\phi$ and $\phi^{\prime}$ are piecewise monotone on a fixed number of intervals.
\end{itemize}

Here we provide bounds for some useful expressions involving $\check{\phi}$ and $\hat{\phi}$.

\begin{lemma} \label{int2}
Let $\phi=\phi_{a,x,T}$ be as above. For $a:=4 \pi \sqrt{ \tilde{m} \tilde{n}}$ we have 
\begin{equation} \label{trans2}
\sum_{l=1}^{\infty} \Big({-\frac{1}{2}+2l }\Big) \Big | \check{\phi} \Big(\frac{1}{2}+2l \Big) \Big | \ll 1+\frac{\sqrt{mn}}{x}.
\end{equation}
\end{lemma}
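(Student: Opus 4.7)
My plan is to estimate $|\check\phi(\tfrac12+2l)|$ by exploiting the support and smoothness properties of $\phi=\phi_{a,x,T}$ together with classical Bessel asymptotics, and then sum over $l$ by splitting according to the size of $r:=\tfrac12+2l$ relative to $b:=a/x\asymp\sqrt{mn}/x$. The relevant properties of $\phi$ are: $\operatorname{supp}\phi\subset[a/(2x+2T),a/(x-T)]\subset[3a/(8x),3a/(2x)]$ (using $T\le x/3$), so that $y\asymp b$ throughout; $|\phi|\le 1$ with $\int\phi\,dy\ll b$; and $|\phi'|\ll x^2/(aT)$ supported on intervals of total measure $\asymp aT/x^2$, so that $\int|\phi'|\,dy\ll 1$.

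For the tail range $r\ge Kb$ with $K$ a large absolute constant, I would apply the bound $|J_{r-1}(y)|\le (y/2)^{r-1}/\Gamma(r)$. Using Stirling's formula and $y\le 3b/2$ on the support, this gives $|J_{r-1}(y)|\ll\sqrt{r}(3be/(4r))^{r-1}/b$, which decays geometrically once $K$ is sufficiently large (say $K\ge 2$, so that $3be/(4Kb)=3e/(8K)<1$). Consequently $\sum_{r\ge Kb}r|\check\phi(r)|\ll 1$. For the oscillatory range $r\le b/K$, where the turning point $y=r-1$ lies well below $\operatorname{supp}\phi$, I would apply the Debye uniform asymptotic (amplitude $\sim(y^2-(r-1)^2)^{-1/4}$, phase $\psi$ with $\psi'(y)=\sqrt{y^2-(r-1)^2}/y$) and integrate by parts once. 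A derivative then lands on $\phi(y)(y^2-(r-1)^2)^{-3/4}$, and since $y^2-(r-1)^2\asymp b^2$ in this range, the resulting integrals satisfy
\[
\int|\phi'|(y^2-(r-1)^2)^{-3/4}\,dy\ll b^{-3/2},\qquad\int\phi\,y(y^2-(r-1)^2)^{-7/4}\,dy\ll b^{-3/2},
\]
yielding $|\check\phi(r)|\ll b^{-3/2}$ and hence a contribution of $\sum_{r\le b/K}r\cdot b^{-3/2}\ll b^{1/2}$.

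For the intermediate range $b/K\le r\le Kb$, where the turning point lies in or near $\operatorname{supp}\phi$, the Bessel function passes through its Airy regime. Here I would split the $y$-integral at the Airy window $|y-(r-1)|\le(r-1)^{1/3}$: inside the window I would use the uniform turning-point bound $|J_{r-1}(y)|\ll(r-1)^{-1/3}$, and outside I would use the Debye asymptotic with one IBP as above. A brief calculation shows each piece yields $|\check\phi(r)|\ll b^{-1}$, and since there are $O(b)$ values of $r=\tfrac12+2l$ in this range, the total contribution is $\ll b$. Combining the three pieces and using $b^{1/2}\le 1+b$ yields
\[
\sum_{l=1}^{\infty}\bigl(-\tfrac12+2l\bigr)\bigl|\check\phi\bigl(\tfrac12+2l\bigr)\bigr|\ll 1+b^{1/2}+b\ll 1+\frac{\sqrt{mn}}{x},
\]
as required. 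The delicate step will be the intermediate range, because there the non-uniform Hankel asymptotic fails (its remainder is of order $(r-1)^2/y\asymp b$, swamping the main term), forcing the use of the Debye uniform expansion together with careful turning-point analysis.
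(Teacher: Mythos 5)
Your plan is sound and, at bottom, it is the same argument the paper uses: the paper's proof is a sketch of Sarnak--Tsimerman's analysis (their pp.~630--632), splitting, for each $k=\tfrac12+2l$, the Bessel integral into the regime below the turning point (exponentially small), the transitional/Airy regime (uniform bound, with only $O(\sqrt{mn}/x)$ values of $k$ for which this regime meets the support of $\phi$), and the oscillatory regime $y\ge k+k^{1/3}$ (asymptotic expansion plus integration by parts). You reorganize the bookkeeping by instead partitioning the sum over $k$ according to the size of $k$ relative to $b\asymp\sqrt{mn}/x$, which makes the count of transitional indices and the $O(1)$ tail more transparent, and your oscillatory-range bound $\sum_{r\le b/K}r\,b^{-3/2}\ll b^{1/2}$ is in fact slightly sharper than the $O(b)$ the paper records for that regime; the quantitative claims ($|\check\phi(r)|\ll b^{-3/2}$ below $b/K$, $\ll b^{-1}$ for $r\asymp b$, geometric decay above $Kb$) all check out against the support, the bound $\int|\phi'|\,dy\ll 1$, and the standard uniform error terms.

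Two points need attention before this is a complete proof. First, in your intermediate range $b/K\le r\le Kb$ the phrase ``outside the Airy window use the Debye asymptotic with one IBP'' only covers the side $y>(r-1)+(r-1)^{1/3}$; on the side $y<(r-1)-(r-1)^{1/3}$ (which can contain part, or for $r$ near $Kb$ all, of the support of $\phi$) the oscillatory form with phase $\psi$ is not valid, and the uniform turning-point bound $|J_{r-1}(y)|\ll r^{-1/3}$ alone is not enough there (it would give a total of order $b^{5/3}$). You must invoke the exponentially damped branch below the turning point, i.e.\ $|J_{r-1}(y)|\ll \exp\bigl(-c\,(r-1-y)^{3/2}/r^{1/2}\bigr)$ times a harmless factor, exactly as in the paper's step ``for $0\le y\le k-k^{1/3}$ the contribution is $O(1)$''; with that, only the $O(b)$ (indeed $O(b^{1/3})$ per fixed distance scale) indices with turning point within $O(r^{1/3})$ of the support contribute $O(1)$ each, and the rest sum to $O(1)$. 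Second, a trivial slip in the tail: with $y/2\le 3b/4$ and $r\ge Kb$ the geometric ratio is $3e/(4K)$, not $3e/(8K)$, so $K=2$ does not quite work ($3e/8>1$); take $K\ge 3$ (or keep the $r^{-1/2}$ from Stirling), which costs nothing since $K$ is an absolute constant.
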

\begin{proof}
For the reader's convenience we sketch the argument that appears on \cite[pp.~630--632]{ST}, indicating what differs in our situation. When $x \geq 4 \pi \sqrt{\tilde{m} \tilde{n}}$, the support of $\phi$ is contained in $[0,3/2]$, and it is immediate from the decay of the Bessel function \cite[(10.14.4)]{DL} that
\begin{equation*}
\check{\phi} \Big(\frac{1}{2}+2l \Big) \ll \frac{1}{\Gamma(\frac{1}{2}+2l)}.
\end{equation*}
Thus the left hand side of \eqref{trans2} is bounded by $O(1)$.  

Now consider the case when $x \leq 4 \pi \sqrt{\tilde{m} \tilde{n}}$. In what follows we write $k:=1/2+2l$ for convenience. We treat each integral \eqref{orig} occurring in the summand of \eqref{trans2} according to the transitional ranges of the $J$-Bessel function. In the range $0 \leq y \leq k-k^{\frac{1}{3}}$, $J_k(y)$ is exponentially small and the contribution is $O(1)$. 

We now consider the transitional range $k-k^{\frac{1}{3}} \leq y \leq k +k^{\frac{1}{3}}$. A computation using \cite[(10.20.2)--(10.20.4),(9.6.1),(9.6.2),(9.6.6) and (9.6.7)]{DL} establishes  the asymptotics for $J_k(y)$ asserted in \cite[(22) and (23)]{ST} when $k$ is half-integral and positive. Using these asymptotics and the fact that the support of $\phi$ is contained in the interval $\big [0, 6 \pi \sqrt{mn} /x \big]$, we see that there are at most $O(\sqrt{m n} /x)$ choices of $k$ for which the transitional range is present on the left hand side of \eqref{trans2}. Each $(k-1) \check{\phi}(k)$ is $O(1)$ for all $k$ in the transitional range using the asymptotics for $J_{\pm 1/3}$ and $K_{1/3}$ in \cite[(10.7.3) and (10.30.2)]{DL} and hence the total contribution from all such $k$ to \eqref{trans2} is $O(\sqrt{mn}/x)$. 

We are now left to bound the contribution for the range $y \geq k+k^{\frac{1}{3}}$. For this one can follow the argument in \cite[pp.~631--632]{ST} starting with the asymptotic in \cite[Eqn~(52)]{ST}. The contribution in this last case is $O(\sqrt{mn} /x)$.
\end{proof}

\begin{lemma} \label{int}
Suppose that $a,x,T$ are as above and that $\phi=\phi_{a,x,T}$. Then we have
\begin{equation*} 
\hat{\phi}(r) \ll
\begin{cases}
\min \big ( r^{-1}, r^{-2} \frac{x}{T} \big ) & \quad  \text{if}  \quad r \geq \max \big(\frac{a}{x},1 \big) \\
r^{-1} & \quad \text{if} \quad r \geq 1.
\end{cases}
\end{equation*}
\end{lemma}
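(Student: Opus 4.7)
The plan is to combine Stirling estimates for the denominator of \eqref{hat} with careful Bessel function analysis for the numerator. By Stirling, $|\Gamma(1/4 \pm ir)|^2 \asymp r^{-1/2}e^{-\pi r}$ and $\sinh(\pi r)\cosh(2\pi r) \asymp \frac{1}{4}e^{3\pi r}$ for $r \geq 1$, so the denominator of \eqref{hat} has size $\asymp r^{-1/2}e^{2\pi r}$. The task reduces to showing that the integral
\[
\mathcal{I}(r) := \int_0^\infty \mathcal{K}(r, y)\,\frac{\phi(y)}{y}\,dy,
\]
where $\mathcal{K}(r,y)$ is the Bessel combination in the convenient form \eqref{con}, satisfies $\mathcal{I}(r) \ll r^{-1/2}\cdot e^{2\pi r}\cdot (\text{desired bound in }r)$. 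Since individual Bessel factors $J_{\pm 2ir}(y)$ can be of size $e^{\pi r}$, this requires exploiting systematic cancellation between the $+$ and $-$ orders inside $\mathcal{K}$.

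I would split the support $[a/(2x+2T), a/(x-T)]$ of $\phi$ into the pre-transitional regime $y \leq r - r^{1/3}$, the transitional regime $|y - r| \ll r^{1/3}$, and the oscillatory regime $y \geq r + r^{1/3}$. In the pre-transitional regime, the Mehler--Sonine integral representations of $J_{\pm 2ir}$ display cancellation between the $+$ and $-$ orders that brings $\mathcal{K}(r,y)$ down to size $\ll e^{2\pi r}r^{-1}$; combined with the prefactor and the trivial bound $\int \phi(y)y^{-1}dy \ll 1$ (the endpoints of the support of $\phi$ differ by at most a factor of $4$ since $T \leq x/3$), this contributes $\ll r^{-1}$. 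In the oscillatory regime, the classical asymptotic $J_{\pm 2ir}(y) \sim \sqrt{2/(\pi y)}\cos(y \mp ir\pi - \pi/4)$ yields $\mathcal{K}(r,y) = C(r)y^{-1/2}\cos y + (\text{lower order})$ with $|C(r)| \ll e^{2\pi r}$; integration by parts in $y$ exploiting the frequency-one oscillation of $\cos y$ (against the smoother $\phi(y)/y^{3/2}$) produces the claimed bound $\ll r^{-1}$.

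For the improvement to $r^{-2} x/T$ under $r \geq \max(a/x, 1)$, the support of $\phi$ lies in $[0, a/(x-T)] \subseteq [0, 3a/(2x)] \subseteq [0, 3r/2]$, so $y \leq r$ throughout. I would then integrate by parts once against $\mathcal{K}(r,y)/y$: using the Bessel recurrence $\partial_y[y^{-\nu}J_\nu(y)] = -y^{-\nu}J_{\nu+1}(y)$ (and its analogue for $-\nu$), one identifies an antiderivative of the same exponential size as $\mathcal{K}(r,y)/y$ but with an additional factor of $r^{-1}$; the resulting integral against $\phi'(y)$, using $\|\phi'\|_\infty \ll x^2/(aT)$ on intervals of total measure $\ll aT/x^2$, contributes the factor $x/T$, with vanishing boundary terms since $\phi$ has compact support in $(0,\infty)$. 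The main technical obstacle is the transitional range $|y - r| \ll r^{1/3}$, where neither Mehler--Sonine nor stationary-phase asymptotics is sharp; this is handled via uniform Airy-function asymptotics for $J_{\pm 2ir}$ (cf. \cite[(10.20.2)--(10.20.4)]{DL}), in direct parallel with the argument for $\check\phi$ in Lemma \ref{int2} following \cite{ST}.
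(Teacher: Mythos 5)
Your reduction via Stirling is correct (the denominator of \eqref{hat} is $\asymp r^{-1/2}e^{2\pi r}$, exactly as in the paper), but the core of your argument does not close, and the mechanism you invoke is the wrong one for Bessel functions of purely imaginary order. Your pre-transitional claim fails on its own terms: writing $\cos\pi(\tfrac14\pm ir)\approx \tfrac12 e^{\pi r}e^{\mp i\pi/4}$ and $J_{-2ir}(y)=\overline{J_{2ir}(y)}$, the bracket in \eqref{hat} is $\approx i e^{\pi r}\,\Im\big(e^{-i\pi/4}J_{2ir}(y)\big)$, and since $|J_{2ir}(y)|\asymp r^{-1/2}e^{\pi r}$ for $y\ll r$ (from $J_{2ir}(y)\approx (y/2)^{2ir}/\Gamma(1+2ir)$), the combination is generically of pointwise size $r^{-1/2}e^{2\pi r}$: there is no pointwise cancellation between the $+$ and $-$ orders beyond what the prefactors already encode, so your bound $\ll e^{2\pi r}r^{-1}$ is unjustified; and even granting it, trivial integration gives only $\hat{\phi}(r)\ll r^{1/2}e^{-2\pi r}\cdot e^{2\pi r}r^{-1}=r^{-1/2}$, not $r^{-1}$. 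The saving of a full factor $r^{-1}$ (and the further $r^{-1}x/T$) can only come from the oscillation in $y$ of $J_{\pm 2ir}(y)$ --- phase $\approx \pm 2r\log y$ for $y\ll r$, local frequency $\asymp\sqrt{4r^2+y^2}/y$ in general --- integrated against $\phi(y)/y$, i.e.\ from integration by parts against that explicit phase, with $\phi'\ll x^2/(aT)$ supported on measure $\ll aT/x^2$ producing the $x/T$. Your substitute mechanism via the recurrence $\partial_y[y^{-\nu}J_\nu]=-y^{-\nu}J_{\nu+1}$ does not achieve this: for $\nu=2ir$ the resulting antiderivative involves $J_{2ir-1}(y)$, whose modulus is $\approx (4r/y)\,|J_{2ir}(y)|$ for $y\ll r$ (larger, not smaller by $r^{-1}$), and the derivative falling on $y^{2ir-2}\phi(y)$ brings down a factor $2ir$, so no gain materializes.

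A second, structural problem is that your regime decomposition is modeled on the wrong Bessel functions: DLMF 10.20.2--10.20.4 are uniform Airy-type asymptotics for large \emph{real} order, and $J_{2ir}(y)$ has no turning point on $(0,\infty)$ --- it oscillates for all $y>0$; the Airy transition at $y\approx$ order is the relevant picture for real-order $J_k$ in Lemma \ref{int2} and for $K_{2ir}$ in Lemma \ref{int3}, not here. Likewise the fixed-order asymptotic $J_{2ir}(y)\sim\sqrt{2/(\pi y)}\cos(y-ir\pi-\pi/4)$ is not valid uniformly when $y\asymp r$. For comparison, the paper's proof is short: using \eqref{con} it reduces everything to $\check{\phi}(2ir+1)=\int_0^\infty J_{2ir}(y)\phi(y)\,dy/y$, quotes the Sarnak--Tsimerman estimates \cite[pp.~629--630]{ST} for $\frac{\cosh(\pi r)}{\sinh(2\pi r)}|\check{\phi}(2ir+1)|$ (namely $r^{-3/2}$, resp.\ $\min(r^{-3/2},r^{-5/2}x/T)$), and converts via Stirling's $|\Gamma(\tfrac14+ir)|^{-2}\sim \tfrac{\sqrt{r}}{2\pi}e^{\pi r}$; those ST estimates are themselves proved by precisely the phase-oscillation/integration-by-parts argument your proposal is missing.
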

\begin{proof}
In view of \eqref{hat} and \eqref{con}, it is sufficient to bound $\check{\phi}(2ir+1)$. Sarnak and Tsimerman \cite[pp~629--630]{ST} prove that
\begin{equation*}
\frac{\text{ch}(\pi r )}{\text{sh}(2 \pi r)} \big | \check{\phi}(2ir+1) \big | \ll
\begin{cases}
 r^{-\frac{3}{2}} & \text{for} \quad r \geq 1 \\
\min \big ( r^{-\frac{3}{2}}, r^{-\frac{5}{2}} \frac{x}{T} \big ) & \text{for} \quad r \geq  \max \big(a/x,1 \big). 
\end{cases}
\end{equation*}
The result follows by recalling the definition of $\hat{\phi}(r)$ in \eqref{hat} and observing that
\begin{equation*}
\frac{1}{\big | \Gamma( \frac{1}{4}+ir) \big |^2} \sim \frac{\sqrt{r}}{2 \pi} e^{\pi r}  \quad \text{as} \quad r \rightarrow \infty,
\end{equation*}
by Stirling's formula.
\end{proof}

\section{Variant of Proskurin--Kuznetsov formula} \label{AAkuzsec}
We introduce the tools for the mixed sign case $m>0$ and $n<0$. Let $\phi$ be as in Section~\ref{proskurin}. Define 
\begin{equation*}
\check{\Phi}(r):=\cosh(\pi r) \int_{0}^{\infty}  K_{2ir}(y) \phi(y) \frac{dy}{y},
\end{equation*}
where $K_{\nu}$ for $\nu \in \mathbb{C}$ denotes the $K$--Bessel function \cite[Section~10.25]{DL}. Let $\{u_j\}$ be an orthonormal basis for $\tilde{\mathcal{L}}_{\frac{1}{2}}(1,\chi)$ with Fourier expansions given by \eqref{fouriercusp}. Then for $m>0$ and $n<0$, \cite[Theorem~4.1]{AA} asserts that
\begin{equation} \label{AAkuz}
\mathcal{W}:=\sum_{c \geq 1} \frac{S(m,n,c,\chi)}{c} \phi \Big(\frac{4 \pi \sqrt{\tilde{m} |\tilde{n}|}}{c}  \Big)=8 \sqrt{i} \sqrt{\tilde{m} |\tilde{n}|} \sum_{j \geq 0} \frac{\overline{\rho_j(m)} \rho_j(n)}{\text{ch} (\pi r_j)} \check{\Phi}(r_j).
\end{equation}

\begin{lemma} \cite[Theorem~6.1]{AA} \label{int3}
Let a,x,T be as above and let $\phi=\phi_{a,x,T}$. Then
\begin{equation*}
\check{\Phi}(r) \ll \begin{cases}
 r^{-\frac{3}{2}} e^{-\frac{r}{2}} & \emph{for} \quad 1 \leq r \leq \frac{a}{8x} \\
 r^{-1} & \emph{for} \quad \max \big(1,\frac{a}{8x} \big) \leq r \leq \frac{a}{x}   \\
 \min \big(r^{-\frac{3}{2}}, r^{-\frac{5}{2}} \frac{x}{T} \big ) & \emph{for} \quad r \geq  \max \big( \frac{a}{x},1 \big). 
\end{cases}
\end{equation*}
\end{lemma}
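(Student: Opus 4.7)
My plan is to prove Lemma \ref{int3} by analyzing $\check\Phi(r)=\cosh(\pi r)\int_0^\infty K_{2ir}(y)\phi(y)\frac{dy}{y}$ in three cases depending on the position of the support of $\phi=\phi_{a,x,T}$ relative to the turning point $y=2r$ of the Bessel function $K_{2ir}(y)$. Since $T\leq x/3$, the support $[a/(2x+2T),a/(x-T)]$ has both endpoints comparable to $a/x$, so we may treat $y$ as being of order $a/x$ throughout. The input is the uniform (Olver-type) asymptotic expansion, which can be extracted from the integral representation $\cosh(\pi r)K_{2ir}(y)=\int_0^\infty\cos(y\sinh t)\cos(2rt)\,dt$ by steepest descent: for $y\geq 2r+r^{1/3}$ one has $\cosh(\pi r)|K_{2ir}(y)|\ll y^{-1/2}\exp\bigl(\pi r-\sqrt{y^2-4r^2}-2r\arcsin(2r/y)\bigr)$; for $|y-2r|\leq r^{1/3}$ one has $\cosh(\pi r)|K_{2ir}(y)|\ll r^{-1/3}$; and for $0<y\leq 2r-r^{1/3}$ one has
\[
\cosh(\pi r) K_{2ir}(y)=(4r^2-y^2)^{-1/4}\cos\psi(y,r)+O\bigl(r^{-1}(4r^2-y^2)^{-3/4}\bigr),
\]
where $\psi(y,r):=\sqrt{4r^2-y^2}-2r\,\mathrm{arccosh}(2r/y)-\pi/4$ and $\partial_y\psi=\sqrt{4r^2-y^2}/y$.

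In the first range $1\leq r\leq a/(8x)$ the support of $\phi$ lies firmly in the exponential-decay regime with $y\geq 3r$; here $\pi r-\sqrt{y^2-4r^2}-2r\arcsin(2r/y)\leq -y/2$, so the integrand is bounded by $y^{-3/2}e^{-y/2}$ and integration yields $\check\Phi(r)\ll r^{-3/2}e^{-r/2}$ with slack. In the intermediate range $\max(a/(8x),1)\leq r\leq a/x$ the support straddles the turning point; using the combined bound $\cosh(\pi r)|K_{2ir}(y)|\ll y^{-1/2}(|y^2-4r^2|^{-1/4}+r^{-1/3})$ on an interval of length $\ll a/x\asymp r$ gives $\check\Phi(r)\ll r^{-1}$. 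In the third range $r\geq\max(a/x,1)$ the support lies in the oscillatory regime (up to a negligible boundary transition); substituting the asymptotic and using $\partial_y\psi\asymp r/y\asymp rx/a$ on the support, one integration by parts in $y$ against $\cos\psi$ delivers $\check\Phi(r)\ll r^{-3/2}$, and a second integration by parts which differentiates $\phi$, with $\|\phi'\|_\infty\ll x^2/(aT)$, produces the refined bound $r^{-5/2}\cdot x/T$ after careful accounting of the various powers of $a$, $r$, and $x$.

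The main obstacle will be the detailed bookkeeping in the third case: one must ensure that the two integrations by parts yield exactly the factors $r^{-5/2}$ and $x/T$ without losses from differentiating $(4r^2-y^2)^{-1/4}$ or from handling the boundary of the oscillatory regime near $y=2r-r^{1/3}$. The argument parallels Sarnak--Tsimerman's treatment of $\check\phi(2ir+1)$ on \cite[pp.~629--630]{ST}, which handles the $J_{2ir}$-version of the same question; since the uniform asymptotics for $K_{2ir}$ and $J_{2ir}$ in the oscillatory range differ only through the explicit phase $\psi$, that bookkeeping transfers almost verbatim to the $K$-Bessel setting, as Ahlgren and Andersen carry out in \cite[Theorem~6.1]{AA}.
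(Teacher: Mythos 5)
First, note that the paper does not prove this lemma at all: it is quoted verbatim from \cite[Theorem~6.1]{AA}, so the only available comparison is with the argument of Ahlgren--Andersen that your sketch is implicitly reconstructing. Your overall route (the identity $\cosh(\pi r)K_{2ir}(y)=\int_0^\infty\cos(y\sinh t)\cos(2rt)\,dt$, uniform asymptotics split at the turning point $y=2r$, and one or two integrations by parts against $\cos\psi$ using $\|\phi'\|_\infty\ll x^2/(aT)$) is indeed the same kind of argument, and your treatment of the third range and of the exponential-decay range is essentially right in outline.

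There is, however, a genuine gap in the middle range $\max(1,a/(8x))\leq r\leq a/x$. The ``combined bound'' you invoke, $\cosh(\pi r)|K_{2ir}(y)|\ll y^{-1/2}\bigl(|y^2-4r^2|^{-1/4}+r^{-1/3}\bigr)$, is false: near the turning point the true size is $\asymp r^{-1/3}$ (e.g.\ $\cosh(\pi r)K_{2ir}(2r)\asymp r^{-1/3}$), not $y^{-1/2}r^{-1/3}\asymp r^{-5/6}$, and in the oscillatory region the envelope is $(4r^2-y^2)^{-1/4}$ with no extra $y^{-1/2}$. With the correct bounds, integrating absolute values over the support (an interval of length $\asymp r$ with $y\asymp r$, against the measure $dy/y$) gives only $\check{\Phi}(r)\ll r^{-1/2}$, short of the claimed $r^{-1}$ by a factor $r^{1/2}$. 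To recover $r^{-1}$ you must exploit the oscillation of $\cos\psi$ on the part of the support with $y\leq 2r-r^{1/3}$: for instance, decompose dyadically in $\Delta=2r-y$, where the amplitude is $\asymp(r\Delta)^{-1/4}/r$ and $\partial_y\psi\asymp\sqrt{\Delta/r}$, so the first-derivative test gives $\ll r^{-3/4}\Delta^{-3/4}$ per block, summing to $\ll r^{-1}$, while the transitional zone $|y-2r|\leq r^{1/3}$ and the region just beyond the turning point each contribute $\ll r^{-1}$ trivially. A smaller slip: in the first range your intermediate inequality ``$\pi r-\sqrt{y^2-4r^2}-2r\arcsin(2r/y)\leq -y/2$'' fails at the lower edge of the support when $r$ is close to $a/(8x)$ (there $y=3r$ and the exponent is only about $-0.55r$); the stated conclusion $r^{-3/2}e^{-r/2}$ still holds because the exponent is decreasing in $y$ and already $\leq -0.55r$ at $y=3r$, but the justification needs to be repaired. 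Finally, be aware that deferring the ``bookkeeping'' of the third case to \cite[Theorem~6.1]{AA} makes the argument circular as an independent proof, though it is consistent with what the paper itself does, namely cite that theorem.
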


\section{Bound for holomorphic forms}  \label{holbd}
In this section we bound the $\mathcal{U}$ term in \eqref{PrKu} uniformly in $m,n$ and $x$. To obtain bounds in terms of the square-free parts of $24m-23$ and $24n-23$, we exploit the Shimura correspondence and Deligne's bound. 

For our purposes, it is sufficient to consider $\Psi$ an even Dirichlet character mod $4N$ and $k \in 2 \mathbb{N}$. Endow  $S_{\frac{1}{2}+k}(4N,\Psi \nu_{\theta})$ with the usual inner product \cite[pg~2514]{DFI2}. We first recall the half--integral weight Petersson formula \cite[Lemma~4]{Blo} that will apply in this setting. Let $\big \{ \psi_j:=\sum_{r \geq 1} a_j(n) e(n \tau) \big \}_{j=1}^J$ be an orthonormal basis for $S_{\frac{1}{2}+k}
(4N,\Psi \nu_{\theta})$. Then for $k \geq 2$ we have 
\begin{equation} \label{pet}
\frac{\Gamma(k-\frac{1}{2})}{(4 \pi n)^{k-\frac{1}{2}}} \sum_{j=1}^J |a_j(n)|^2=1+2 \pi i^{-\frac{1}{2}-k} \sum_{4N \mid c} c^{-1} J_{k-\frac{1}{2}} \Big(\frac{4 \pi n}{c} \Big)  K_{\Psi}(n,n,c),
\end{equation}
where 
\begin{equation*}
K_{\Psi}(m,n,c):=\sum_{d \pmod c} \varepsilon_d \Big( \frac{c}{d} \Big) \Psi(d) e \Big( \frac{m d+n \overline{d}}{c} \Big), 
\end{equation*}
is a twisted Kloosterman sum.

\begin{lemma} \label{Ubd}
Suppose $m,n>0$ are integers and $\phi$ is as above with $x \geq 1$ and $a:=4 \pi \sqrt{\tilde{m} \tilde{n}}$. Then for any $\varepsilon>0$ we have 
\begin{equation} \label{normalholo}
\mathcal{U} \ll_{\varepsilon} (mn)^{\varepsilon} \Big( (mn)^{\frac{1}{4}} + \frac{(m n)^{\frac{3}{4}}}{x} \Big).
\end{equation}
Furthermore, if $24 m-23=m_0^2 s$ and $ 24n-23=n_0^2 t$ with $s$ and $t$ square-free, then 
\begin{equation} \label{holohecke}
\mathcal{U} \ll_{\varepsilon} |m_0 n_0|^{\varepsilon} (st)^{\frac{1}{4}+\varepsilon} \Big(1+\frac{(mn)^{\frac{1}{2}}}{x} \Big).
\end{equation}
\end{lemma}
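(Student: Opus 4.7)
The plan is to apply Cauchy--Schwarz in $f$ to bound $|\mathcal{U}|$ by
\[
\sum_{l=1}^{\infty} \frac{4\Gamma(1/2+2l)}{(4\pi)^{1/2+2l}(\tilde m\tilde n)^{l-1/4}} |\check\phi(1/2+2l)| \Bigl(\sum_{f\in B_l}|a_f(m)|^2\Bigr)^{1/2}\Bigl(\sum_{f\in B_l}|a_f(n)|^2\Bigr)^{1/2},
\]
then to bound each mean-square via Petersson's formula \eqref{pet} after transferring from $S_{\frac{1}{2}+2l}(1,\chi)$ to $S_{\frac{1}{2}+2l}(576,\chi_{12}\nu_\theta)$ via the lift $L(f)(\tau)=f(24\tau)$ from Section~\ref{heckeholom}. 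Since $L(f)$ has Fourier coefficient at frequency $24n-23$ equal to $a_f(n)$, and a direct substitution in the Petersson integral shows that $L$ rescales inner products by a factor proportional to $24^{-(1/2+2l)}$, one obtains
\[
\sum_{f\in B_l}|a_f(n)|^2 \ll 24^{-(1/2+2l)} \sum_{g\in B_l^{*}}|b_g(24n-23)|^2,
\]
where $B_l^{*}$ is an orthonormal basis of the level-$576$ space (to be chosen as a joint Hecke eigenbasis for \eqref{holohecke}).

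To prove \eqref{normalholo} I would apply \eqref{pet} to the right-hand side. The diagonal contributes $(4\pi(24n-23))^{2l-1/2}/\Gamma(2l-1/2)$, while the off-diagonal is bounded using Weil's estimate for the twisted Kloosterman sum $K_{\chi_{12}}$ together with the standard Bessel estimates $|J_{2l-1/2}(y)| \ll \min(y^{2l-1/2}/\Gamma(2l+1/2),\, y^{-1/2})$; a routine computation gives $O(\tilde n^{1/2+\varepsilon})$. The $24$-powers cancel cleanly as $24^{2l-1/2}\cdot 24^{-(1/2+2l)} = 24^{-1}$, yielding $\sum_{f\in B_l}|a_f(n)|^2 \ll (4\pi\tilde n)^{2l-1/2}\tilde n^{1/2+\varepsilon}/\Gamma(2l-1/2)$. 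Using $\Gamma(1/2+2l)/\Gamma(2l-1/2)=2l-1/2$, the $l$-th term of $|\mathcal{U}|$ simplifies to $\ll (2l-1/2)(mn)^{1/4+\varepsilon}|\check\phi(1/2+2l)|$, and Lemma~\ref{int2} finishes the proof of \eqref{normalholo}.

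For \eqref{holohecke} I would save a factor of $m_0$ on the $m$-side (and symmetrically of $n_0$ on the $n$-side) via Hecke theory. Taking $B_l^{*}$ to be a joint eigenbasis for the $T_{p^2}$ with $p\nmid 6$, each Shimura lift $\mathrm{Sh}_s(g)\in S_{4l}(288,\mathbf{1})$ is a Hecke eigenform by the intertwining $\mathrm{Sh}_s(T_{p^2}g)=T_p\mathrm{Sh}_s(g)$ of Lemma~\ref{shim}. Möbius-inverting the defining Dirichlet-series identity of Lemma~\ref{shim} gives $b_g(sm_0^2) = \sum_{d\mid m_0}\mu(d)\chi_{12,s}(d)\, d^{2l-1}\, b_{\mathrm{Sh}_s(g)}(m_0/d)$. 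Combining this with Deligne's bound $|b_{\mathrm{Sh}_s(g)}(N)| \leq |b_{\mathrm{Sh}_s(g)}(1)|\tau(N) N^{(4l-1)/2}$ and the identity $b_{\mathrm{Sh}_s(g)}(1)=b_g(s)$ (from $N=1$) yields
\[
|b_g(sm_0^2)| \ll |b_g(s)|\, m_0^{(4l-1)/2+\varepsilon};
\]
if $\mathrm{Sh}_s(g)=0$ then the Dirichlet identity forces $b_g(sN^2)=0$ for all $N\geq 1$, so the bound is trivial. Squaring, summing over $g$, and applying \eqref{pet} at the smaller argument $s$ (off-diagonal $O(s^{1/2+\varepsilon})$) gives $\sum_{f\in B_l}|a_f(m)|^2 \ll (4\pi\tilde m)^{2l-1/2} m_0^\varepsilon s^{1/2+\varepsilon}/\Gamma(2l-1/2)$, a saving of $m_0$ over the crude bound used in \eqref{normalholo}. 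Cauchy--Schwarz against the analogous estimate for $n$, followed by summation over $l$ via Lemma~\ref{int2}, then delivers \eqref{holohecke}.

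The main obstacle is the careful bookkeeping of constants introduced by the lift $L$: one must verify that the scaling $24^{-(1/2+2l)}$ of the Petersson inner product combines exactly with the factor $24^{2l-1/2}$ coming from the Bessel function at the shifted argument $24n-23$, so that the resulting bound is normalized in $\tilde m\tilde n$ rather than in $24m-23$ and $24n-23$. Once this cancellation is confirmed, \eqref{normalholo} reduces to a routine Petersson/Weil estimate and \eqref{holohecke} to a routine Shimura/Deligne computation.
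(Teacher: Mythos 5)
Your proposal is correct and follows essentially the same route as the paper: Cauchy--Schwarz applied to \eqref{U}, transfer to $S_{\frac12+2l}(576,\chi_{12}\nu_\theta)$ via $L$ with the rescaling of Petersson norms, the half-integral Petersson formula \eqref{pet} with the Weil-type bound for $K_{\chi_{12}}$ to get the $O(n^{1/2+\varepsilon})$ off-diagonal for \eqref{normalholo}, and the Shimura lift plus the Hecke eigenvalue relation and Deligne's bound (your M\"obius inversion of the Dirichlet-series identity is exactly the paper's computation of the Dirichlet inverse $h^{-1}$) for \eqref{holohecke}, with Lemma \ref{int2} finishing both bounds. The only slip is the claimed bound $|J_{2l-1/2}(y)|\ll y^{-1/2}$, which is not uniform in the order (it fails in the transition range $y\asymp 2l$); replacing it by the uniform bound $|J_\nu(y)|\ll y^{-1/3}$, or arguing as the paper does with $|J_\nu|\le 1$ on $c\le n^{1+\delta}$ and the power-series bound for larger $c$, still gives the required $O(n^{1/2+\varepsilon})$, so the argument goes through unchanged.
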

\begin{proof}
Let $\{f_{jl} \}_{1 \leq j \leq \text{dim} S_{\tiny{\frac{1}{2}}+2l}(1,\chi)}$ be an orthonormal Hecke eigenbasis with respect to $\tilde{T}_{p^2}$ for all primes $p \nmid 6$. Suppose that 
\begin{equation*}
f_{jl}(\tau)=\sum_{r=1}^{\infty} a_{jl}(r) e \Big(\Big(r-\frac{23}{24} \Big) \tau \Big)
\end{equation*} 
and let 
\begin{equation*}
g_{jl}(\tau):=L(f_{jl})(\tau)=\sum_{m=1}^{\infty} c_{jl}(m) e (m \tau) \in S_{\frac{1}{2}+2l}(576,\chi_{12} \nu_{\theta}).
\end{equation*}
For fixed $l$, the set $\{f_{jl}\}$ injects into its image $\{g_{jl}\} \subset S_{\frac{1}{2}+2l}(576,\chi_{12} \nu_{\theta})$ and $\{g_{jl}\}$ is an orthogonal set consisting of Hecke eigenforms for all $T_{p^2}$ with $p \nmid 6$. For each $l$, a computation shows that
\begin{equation} \label{rescale}
\{(24)^{l+\frac{1}{4}} [\Gamma:\Gamma_0(24,24)]^{-\frac{1}{2}} g_{jl} \}
\end{equation} 
is an orthonormal set of Hecke eigenforms. Since $a_{jl}(n)=c_{jl}(24n-23)$, we trivially have
\begin{equation} \label{ineq}
\sum_{j} |a_{jl}(n)|^2=\sum_{j} |c_{jl}(24n-23)|^2.
\end{equation}

Applying the triangle and Cauchy--Schwarz inequalities to the right side of \eqref{U} and using \eqref{ineq} we obtain 
\begin{equation} \label{CR}
\mathcal{U} \ll \sum_{l=1}^{\infty} \frac{ \Gamma(\frac{1}{2}+2l) | \check{\phi}(\frac{1}{2}+2l)  | }{(4 \pi)^{\frac{1}{2}+2l} (\tilde{m} \tilde{n})^{-\frac{1}{4}+l}} \Big( \sum_j  | c_{jl}(24m-23)|^2   \Big)^{1/2} \Big( \sum_j | c_{jl}(24n-23) |^2   \Big)^{1/2}. 
\end{equation}
The set in \eqref{rescale} can be extended to an orthonormal basis of $S_{\frac{1}{2}+2l}(576,\chi_{12} \nu_{\theta} )$. Applying \eqref{pet} and the triangle inequality we obtain 
\begin{multline} \label{petersson}
\sum_{j} |c_{jl}(24n-23)|^2 \ll \frac{(24)^{-2l} \Big(4 \pi (24n-23) \Big)^{-\frac{1}{2}+2l}}{\Gamma(-\frac{1}{2}+2l)} \\
\times  \Bigg(1+2 \pi \sum_{c \equiv 0 \pmod{576}} \frac{\big| K_{\chi_{12}}(24n-23,24n-23,c) \big|}{c} \Big| J_{-\frac{1}{2}+2l} \Big(\frac{4 \pi (24 n-23)}{c} \Big) \Big|   \Bigg).
\end{multline}
Let $\delta>0$ be fixed and small. To bound the right hand side of \eqref{petersson} we consider the cases $c \leq n^{1+\delta}$ and $c> n^{1+\delta}$. Here we stress that the estimates obtained are uniform in $l$. When $c \leq n^{1+\delta}$, we use \cite[Lemma~4]{Wai} 
\begin{equation} \label{kloost}
|K_{\chi_{12}}(24n-23,24n-23,c) | \leq (24n-23,c)^{\frac{1}{2}} \tau(c) c^{\frac{1}{2}},
\end{equation}
together with \cite[(10.14.1)]{DL} to obtain
\begin{equation} \label{initseg}
 \sum_{\substack{1 \leq c \leq n^{1+\delta} \\ c \equiv 0 \pmod{576}}} \frac{\big| K_{\chi_{12}}(24n-23,24n-23,c) \big|}{c} \Big |  J_{-\frac{1}{2}+2l}  \Big(\frac{4 \pi (24n-23)}{c} \Big) \Big | \ll_{\varepsilon} n^{\frac{1}{2}+\frac{\delta}{2}+\varepsilon}.
\end{equation}
In the case $c \geq n^{1+\delta}$ we apply \cite[(10.14.4)]{DL} and \eqref{kloost} in the following computation
\begin{align}
 \sum_{\substack{c>n^{1+\delta} \\ c \equiv 0 \pmod{576}}} & \frac{\big|K_{\chi_{12}}(24n-23,24n-23,c) \big|}{c} \Big| J_{-\frac{1}{2}+2l} \Big(\frac{4 \pi (24n-23)}{c} \Big) \Big | \nonumber \\
 & \ll \frac{1}{\Gamma(-\frac{1}{2}+2l)}  \sum_{c>n^{1+\delta}} \frac{|K_{\chi_{12}}(24n-23,24n-23,c)|}{c} \Big( \frac{4 \pi (24n-23)}{2c} \Big)^{-\frac{1}{2}+2l} \nonumber \\
& \ll_{\varepsilon} \frac{1}{ \Gamma(-\frac{1}{2}+2l)} \sum_{c>n^{1+\delta}} \frac{1}{c^{\frac{1}{2}-\varepsilon}} (24n-23,c)^{\frac{1}{2}} \Big( \frac{4 \pi (24n-23)}{2c} \Big)  \Big( \frac{4 \pi (24n-23)}{2c} \Big)^{-\frac{3}{2}+2l} \nonumber  \\
& \ll_{\varepsilon} \frac{(48 \pi)^{2l} n^{1-\delta(-\frac{3}{2}+2l)} }{ \Gamma(-\frac{1}{2}+2l)} \sum_{d \mid 24n-23} \frac{1}{d^{1-\varepsilon}} \sum_{c^{\prime}>n^{1+\delta}/d} \frac{1}{(c^{\prime})^{\frac{3}{2}-\varepsilon}} \nonumber \\
& \ll_{\varepsilon} n^{\frac{1}{2}+\varepsilon}  \label{case2}. 
\end{align}
Combining \eqref{CR}--\eqref{case2} we obtain
\begin{equation} \label{trans}
\mathcal{U} \ll_{\varepsilon} (mn)^{\frac{1}{4}+\varepsilon} \sum_{l=1}^{\infty} \Big({-\frac{1}{2}+2l} \Big) \Big | \check{\phi} \Big(\frac{1}{2}+2l \Big) \Big |. 
\end{equation}
Thus \eqref{normalholo} follows from Lemma \ref{int2}.

We now prove \eqref{holohecke}. Since $g_{jl} \in S_{\frac{1}{2}+2l}(576,\chi_{12} \nu_{\theta})$ is an eigenform under the action of $T_{n^2}$ for all $n$ coprime to $6$, we know that $\text{Sh}_t(g_{jl}) \in S_{4l}(288,\mathbf{1})$ is an eigenform under the action of $T_{n}$ with the same eigenvalue. Denote this eigenvalue  by $\lambda_{jl}(n)$. For each $l$ and $j$ define $b_t(n) \in \mathbb{C}$ by 
\begin{equation*}
\text{Sh}_{t}(g_{jl}) (\tau)=\sum_{n=1}^{\infty} b_t(n) e(n \tau) \in S_{4l}(288,\mathbf{1}).
\end{equation*}
We also define the arithmetic functions
\begin{align}
g(u):=c_{jl}(t u^2) \quad \text{and} \quad h(u):=u^{2l-1} \Big( \frac{12t}{u} \Big).
\end{align}
The equality in Lemma \ref{shim} implies that $b_t=g * h$. Observe that $h$ is multiplicative and $h(1)=1$, so $h$ has a multiplicative Dirichlet inverse. We have $h^{-1}(1)=1$ and a computation for $ p$ prime and $\alpha \in \mathbb{N}$ yields 
\begin{equation*}
h^{-1}(p^{\alpha})=\begin{cases}
-\big(\frac{12t}{p}  \big) p^{2l-1} & \quad \text{if} \quad \alpha=1 \\
0 & \quad \text{if} \quad \alpha \geq 2. 
\end{cases}
\end{equation*}
Thus 
\begin{equation} \label{invh}
|h^{-1}(u)| \leq u^{2l-1} \quad \text{for all} \quad u \in \mathbb{N}.
\end{equation}
We now use the fact that the $\text{Sh}_t(g_{jl})$ are Hecke eigenforms for the $T_n$ with $(n,6)=1$  and are of integral weight. We have $b_t(1)=c_{jl}(t)$, so for $(d,6)=1$ we have
\begin{equation*}
b_t (d)=\lambda_{jl}(d) b_t(1)=\lambda_{jl} (d) c_{jl}(t).
\end{equation*}
Thus
\begin{equation} \label{cjl}
c_{jl}(tn_0^2)=\sum_{d \mid n_0} b_t(d) h^{-1} \Big( \frac{n_0}{d} \Big)=c_{jl}(t) \sum_{d \mid n_0} \lambda_{jl}(d) h^{-1} \Big( \frac{n_0}{d} \Big).
\end{equation}
By Deligne's bound \cite{De} we have 
\begin{equation} \label{delbd}
|\lambda_{jl}(d)| \ll_{\varepsilon} d^{2l-\frac{1}{2}+\varepsilon}.
\end{equation}
Using \eqref{invh} and \eqref{delbd}, \eqref{cjl} becomes 
\begin{equation*}
|c_{jl}(24n-23)|=|c_{jl}(tn_0^2)|  \ll_{\varepsilon} |c_{jl}(t)| n_0^{2l-\frac{1}{2}+\varepsilon}.
\end{equation*}

We may replace each summand on the right hand side of \eqref{ineq} with $|c_{jl}(t) | n_0^{2l-\frac{1}{2}+\varepsilon}$. Performing similar computations to those occuring in \eqref{CR}--\eqref{case2}, we see that \eqref{trans} becomes 
\begin{equation*}
\mathcal{U} \ll_{\varepsilon} (m_0 n_0)^{\varepsilon} (st)^{\frac{1}{4}+\varepsilon} \sum_{l=1}^{\infty} \Big({-\frac{1}{2}+2l} \Big) \Big | \check{\phi} \Big(\frac{1}{2}+2l \Big) \Big |.
\end{equation*}
Thus Lemma \ref{int2} implies \eqref{holohecke}.
\end{proof}

\section{Estimates for the coefficients of Maass cusp forms} \label{massbd}
We bound the quantities in \eqref{V} and \eqref{AAkuz} in the proofs of the main theorems by modifying the dyadic arguments of \cite{ST}.  Here we collect the main inputs required for this argument. 

\begin{prop} \cite[Theorem~1.5]{AA} \label{AAmve}
Suppose $\{u_j\}$ is an orthonormal basis for $\tilde{\mathcal{L}}_{\frac{1}{2}}(1,\chi)$ with spectral parameters $r_j$ and Fourier expansion given by \eqref{fouriercusp}. If $n<0$ then we have
\begin{equation*}
|\tilde{n}| \sum_{0<r_j \leq x} \frac{|\rho_j(n)|^2}{\emph{ch}(\pi r_j)}=\frac{x^{\frac{5}{2}}}{5 \pi^2}+ O_{\varepsilon} \big( x^{\frac{3}{2}} 
\log x+ |n|^{\frac{1}{2}+\varepsilon} x^{\frac{1}{2}} \big).
\end{equation*}
\end{prop}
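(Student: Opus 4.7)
The plan is to use a Kuznetsov-type trace formula specialised to $m = n$ so that the spectral side is exactly the quantity we want to estimate, with the Bessel-side transform $\hat\phi(r_j)$ chosen to approximate $\mathbf{1}_{(0, x]}(r_j)$. Since $n < 0$, the pair $(n, n)$ has both entries negative, and neither the Proskurin formula \eqref{PrKu} (requiring $m, n > 0$) nor the Ahlgren--Andersen variant \eqref{AAkuz} (requiring $mn < 0$) applies directly. The needed formula can be derived from Proskurin's formula for the conjugate multiplier $\bar\chi$ at the positive indices $1 - n$, using the conjugation identity $\overline{S(n, n, c, \chi)} = S(1 - n, 1 - n, c, \bar\chi)$ from \eqref{eq:nuconj} together with the correspondence $u_j \mapsto \overline{u_j}$ between orthonormal bases of $\tilde{\mathcal L}_{1/2}(1, \chi)$ and $\tilde{\mathcal L}_{-1/2}(1, \bar\chi)$.

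Next, pick a smooth non-negative majorant $\hat\phi$ of $\mathbf{1}_{(0, x]}$, supported in $(0, x(1 + \delta)]$ for a parameter $\delta$ to be optimised, and let $\phi$ denote its inverse transform. The trace formula then decomposes the spectral sum as $M(x) + H(x, n) + K(x, n)$: an identity/Plancherel main term, a holomorphic contribution controlled by Petersson's formula \eqref{pet}, and a Kloosterman term $\sum_c S(n, n, c, \chi)\, \phi(4 \pi |\tilde n|/c)/c$. A direct Plancherel evaluation gives $M(x) = x^{5/2}/(5 \pi^2) + O(x^{3/2})$; the holomorphic contribution is bounded by $O(x^{3/2})$ via \eqref{pet} together with \eqref{trans2}; and the Kloosterman sum is controlled by a Weil-type estimate $|S(n, n, c, \chi)| \ll (|n|, c)^{1/2} c^{1/2 + \varepsilon}$ available for the eta multiplier, which after dyadic decomposition of the $c$-sum and standard $J$-Bessel asymptotics yields $K(x, n) = O(x^{3/2} \log x + |n|^{1/2 + \varepsilon} x^{1/2})$.

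Finally, an unsmoothing step passes from the mollified cutoff $\hat\phi$ to the sharp indicator $\mathbf{1}_{(0, x]}$, with the cost controlled by Weyl's law for the density of spectral parameters in a thin shell near $r = x$; the parameter $\delta$ is chosen to balance this loss against the Kloosterman error. The main technical obstacle will be the Bessel analysis in the transitional range (where $c \asymp |\tilde n|/r$ and $r \asymp x$), which governs both the $\log x$ factor in the error and the uniformity in $n$. Maintaining the stated error terms requires sharp asymptotics for $J_{\pm 2 i r}$ in the transitional range rather than trivial bounds, together with a careful split of the $c$-sum according to whether $c$ is small (where the Weil bound is efficient) or large (where Bessel decay dominates).
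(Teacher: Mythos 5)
First, a remark on what you are being compared against: the paper does not prove this Proposition at all --- it is imported verbatim from Ahlgren--Andersen \cite[Theorem~1.5]{AA} --- so your sketch is effectively competing with their argument rather than with anything in this paper. Your overall outline is in the spirit of the standard proof: the reduction of the both-negative case $m=n<0$ to positive indices in weight $-\tfrac12$ with multiplier $\overline{\chi}$ via \eqref{eq:nuconj} is correct, and the shape ``diagonal main term $+$ Kloosterman error via a Weil-type bound and Bessel asymptotics, then unsmooth'' is the right one.

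The genuine gap is that the decomposition $M(x)+H(x,n)+K(x,n)$ does not follow from any formula you actually have. The Proskurin-type formula \eqref{PrKu} (and its weight $-\tfrac12$ analogue you propose to obtain by conjugation) is stated in the \emph{geometric} direction: one prescribes $\phi$ subject to \eqref{phicond} and the transforms $\check\phi,\hat\phi$ are then determined, and crucially no $\delta_{mn}$ diagonal term is present, so the main term $x^{5/2}/(5\pi^2)$ cannot emerge from it. You cannot simply ``choose $\hat\phi\approx\mathbf{1}_{(0,x]}$ and let $\phi$ be its inverse transform'': in half-integral weight the kernels $J_{2ir}$ alone are not a complete system (Sears--Titchmarsh), the inverse transform of a sharp cutoff does not lie in the class \eqref{phicond}, and in the genuine spectral-direction formula (the one carrying the diagonal term) the holomorphic spectrum does not occur at all, since holomorphic forms of weight $\geq 5/2$ do not live in $\tilde{\mathcal{L}}_{\frac12}(1,\chi)$. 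Thus your term $H(x,n)$, and the appeal to \eqref{pet} together with \eqref{trans2} (the latter proved only for the specific test functions $\phi_{a,x,T}$, not for the inverse transform of a smoothed indicator), conflates the two directions of the trace formula. Establishing the diagonal-bearing, same-sign-negative formula for the eta multiplier, with usable control of the geometric transform of the cutoff in the transitional Bessel range, is precisely the content of Ahlgren--Andersen's proof, and your sketch assumes it rather than supplies it. Two further points need care: the exponent $5/2$ and the constant $1/(5\pi^2)$ come from the $W_{-\frac14,ir}$ (equivalently $\Gamma(\tfrac34\pm ir)$) normalization attached to negatively indexed coefficients, which a ``direct Plancherel evaluation'' must track explicitly; and the unsmoothing step needs more than Weyl's law --- to pass from the mollified cutoff to $\mathbf{1}_{(0,x]}$ uniformly in $n$ you need an a priori bound for the shell sum $\sum_{x<r_j\leq(1+\delta)x}|\rho_j(n)|^2/\cosh(\pi r_j)$ with explicit $n$-dependence (typically by positivity with a fixed smooth majorant), otherwise the error term $|n|^{\frac12+\varepsilon}x^{\frac12}$ is not recovered.
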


Andersen and Duke improve the mean value estimate of \cite[Theorem~1.5]{AA} when $n>0$. This improvement will be important in the proofs of our main theorems. We state only a special case of their result.
\begin{prop} \cite[Theorem~4.1]{And} \label{andersen}
Suppose $\{u_j\}$ is an orthonormal basis for $\tilde{\mathcal{L}}_{\frac{1}{2}}(1,\chi)$ with spectral parameters $r_j$ and Fourier expansion given by \eqref{fouriercusp}. If $n>0$ then we have
\begin{equation*}
\tilde{n} \sum_{x \leq r_j \leq 2x} \frac{|\rho_j(n)|^2}{\emph{ch}(\pi r_j)} \ll_{\varepsilon} x^{\frac{3}{2}}+x^{-\frac{1}{2}} (\log x)^{\frac{1}{2}+\varepsilon}  n^{\frac{1}{2}+\varepsilon}.
\end{equation*}
\end{prop}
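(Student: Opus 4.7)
The plan is to apply the Proskurin trace formula \eqref{PrKu} with $m=n$ and a carefully chosen non-negative test function, then use positivity to isolate the dyadic spectral window and bound the resulting geometric side via Weil's estimate for $S(n,n,c,\chi)$.

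First I would construct a smooth $\phi=\phi_{x,n}:[0,\infty)\to[0,\infty)$ whose Bessel transform $\hat\phi$ of \eqref{hat} satisfies $\hat\phi(r)\geq 0$ on $[0,\infty)$ and $\hat\phi(r)\geq c_0>0$ for $r\in[x,2x]$, with rapid decay outside a slightly enlarged window. A standard way to produce such a $\phi$ is to start with a non-negative bump $h(r)$ on the spectral side concentrated at scale $x$ and invert the transform \eqref{hat}, or to follow the Kuznetsov-style construction used in \cite{ST}. This construction should simultaneously furnish good bounds on $\phi$ and its derivatives, on the holomorphic transform $\check\phi(1/2+2l)$ (which must decay rapidly once $l\gg x$), and on the effective support of $\phi$ (concentrated on $y\asymp x$).

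Then I would invoke \eqref{PrKu} with $m=n$, obtaining
\begin{equation*}
4\tilde n\sum_{j\geq 0}\frac{|\rho_j(n)|^2}{\text{ch}(\pi r_j)}\hat\phi(r_j)=\sum_{c\geq 1}\frac{S(n,n,c,\chi)}{c}\phi\!\left(\frac{4\pi \tilde n}{c}\right)-\mathcal U.
\end{equation*}
By positivity, the desired dyadic sum is bounded by $c_0^{-1}$ times the left-hand side, hence by the absolute value of the right-hand side. The Kloosterman side, estimated using the Weil-type bound $|S(n,n,c,\chi)|\ll \tau(c) c^{1/2}(n,c)^{1/2}$ together with the support constraint $c\asymp \tilde n/x$ forced by $\phi$, contributes $\ll_\varepsilon \tilde n^{1/2+\varepsilon}x^{-1/2}(\log x)^{1/2+\varepsilon}$, yielding the second term in the claimed estimate. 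The holomorphic contribution $\mathcal U$ is handled via the half-integral-weight Petersson formula \eqref{pet} (essentially Lemma \ref{Ubd}): the diagonal contributes $O(1)$ per holomorphic weight up to the effective cutoff $l\asymp x$, giving $O(x^{3/2+\varepsilon})$ after the $\tilde n$ normalisation and summation over $l$, matching the first term.

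The main obstacle is the construction of $\phi$ with simultaneous control of four competing requirements: the lower bound on $\hat\phi$ over $[x,2x]$ needed for positivity, the decay of $\hat\phi$ outside a slightly larger window so that the complementary spectral mass is absorbed, the decay of $\check\phi(1/2+2l)$ so that the holomorphic contribution stays within $O(x^{3/2+\varepsilon})$, and the effective support and size of $\phi$ itself for the geometric side. A conceptual point worth highlighting is the absence here of a main term of size $x^{5/2}$, in contrast with Proposition \ref{AAmve} for $n<0$: in the present same-sign setting the relevant Kloosterman kernel is the oscillatory $J$-Bessel of \eqref{hat} rather than the exponentially behaved $K$-Bessel of \eqref{AAkuz}, so no diagonal-type contribution of size $x^{5/2}$ survives and the bound collapses to the Weyl-law scale $x^{3/2}$.
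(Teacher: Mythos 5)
You should first note that the paper itself does not prove this proposition: it is quoted verbatim from Andersen--Duke \cite[Theorem~4.1]{And}, whose argument (like the classical spectral large sieve) runs through the version of the trace formula in which the spectral weight $h(r)$ is prescribed directly, so that the holomorphic spectrum never enters. Your route through the geometric-to-spectral formula \eqref{PrKu} forces you to confront $\mathcal{U}$, and this is where the proposal breaks down. Taking $m=n$ in \eqref{U} and estimating the Petersson average as in Lemma~\ref{Ubd}, the off-diagonal (Kloosterman) terms of \eqref{pet}, bounded with the only available input (the Weil-type bound \eqref{kloost}), contribute $O(n^{1/2+\varepsilon})$ per weight uniformly in $l$; this term cannot be discarded, and one arrives at $\mathcal{U}\ll_\varepsilon n^{\frac12+\varepsilon}\sum_{l\ge 1}(2l)\,|\check\phi(\tfrac12+2l)|$. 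For a test function engineered so that $\hat\phi$ is a bump on $[x,2x]$, $\phi$ lives at scale $y\asymp x$, so every weight with $2l\asymp x$ meets the transition range of $J_{2l-\frac12}$, and (as in the transitional-range analysis behind Lemma~\ref{int2}, where $k\,\check\phi(k)\asymp 1$ there) the sum $\sum_l 2l\,|\check\phi(\tfrac12+2l)|$ is of size about $x$, not $O(x^{-1/2})$. Hence your holomorphic contribution is as large as $n^{1/2+\varepsilon}x$, which overwhelms the claimed bound $x^{3/2}+n^{1/2+\varepsilon}x^{-1/2}$ exactly in the regime $x\ll n^{1/4}$ where the second term dominates and where the proposition is actually used in this paper (spectral windows small relative to $m,n$). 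You accounted only for the Petersson diagonal; the $n$-dependent off-diagonal multiplied by the $\asymp x$ contributing weights is the real enemy, and no mechanism is offered to cancel it.

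There are also structural problems with the test-function construction that you flag as ``the main obstacle'' but underestimate. For real $\phi$ and real $r$ the bracket in \eqref{con} is purely imaginary (since $J_{-2ir}=\overline{J_{2ir}}$), so by \eqref{hat} $\hat\phi(r)$ lies on a fixed line through the origin in $\mathbb{C}$: the literal requirement $\hat\phi\ge c_0>0$ is impossible, and one must work with a rotated transform, treating the exceptional point $r_0=i/4$ (which does contribute here, as there is no hypothesis excluding $n-1\in\mathcal{P}$) separately. Moreover, the naive inversion ``start from a bump $h$ on the spectral side'' produces a $\phi$ decaying only like $y^{-1/2}$, violating \eqref{phicond}, and smoothing this out does nothing to remove the holomorphic contamination: the impossibility of prescribing $\hat\phi$ while making all $\check\phi(\tfrac12+2l)$ negligible is precisely the Sears--Titchmarsh incompleteness, and the standard cure is the choose-$h$ form of the formula used in \cite{And}. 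Finally, your closing heuristic is off: the absence of an $x^{5/2}$ main term for $n>0$ is not because the $J$-Bessel kernel kills a diagonal, but because of the asymmetry $W_{\frac14,ir}$ versus $W_{-\frac14,ir}$ (equivalently $|\Gamma(\tfrac34+ir)|^2$ versus $|\Gamma(\tfrac14+ir)|^2$) in weight $\tfrac12$; the term $x^{3/2}$ is itself the diagonal (Weyl-law) contribution in this normalisation, just as $x^{5/2}$ is in Proposition~\ref{AAmve}.
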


One can relate the positively (resp. negatively) indexed coefficients of a Maass cusp form of weight $\frac{1}{2}$ to negatively (resp. positively) indexed coefficients of a Maass cusp form of weight $-\frac{1}{2}$ via the conjugation map (cf. \cite[(2.6)]{AD}). This accounts for difference in the order of magnitude for the $x$ parameter in the estimates occurring in Propositions \ref{AAmve} and \ref{andersen}.

The second main idea is the application of an averaged form of a pointwise bound due to Duke \cite{D} for the Fourier coefficients of Maass cusp forms of half integral weight with multiplier $\big(\frac{12}{\bullet} \big) \nu_{\theta}$. This average bound was established by Ahlgren and Andersen \cite[Theorem~8.1]{AA} using a modified version of Duke's argument. When $n>0$, we remove a hypothesis in their Theorem which restricts divisibility of $n$ by arbitrarily high powers of $5$ and $7$. 

\begin{prop} \label{hyperem}
Let $\{u_j\}$ be an orthonormal basis for $\tilde{\mathcal{L}}_{\frac{1}{2}}(1,\chi)$ with spectral parameters $r_j$ and Fourier expansion given by \eqref{fouriercusp}. If all the $u_j$ are Hecke eigenforms of $\mathcal{T}_{p^2}$ with $p \nmid 6$, then for $x \geq 1$ and $n>0$ we have, 
\begin{equation*} 
n \Big | \sum_{0<r_j \leq x} \frac{|\rho_j(n)|^2}{\emph{ch}(\pi r_j)} \Big | \ll_{\varepsilon}  n^{\frac{3}{7}+\varepsilon} x^{\frac{9}{2}}.
\end{equation*}
\end{prop}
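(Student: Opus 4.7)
The plan is to follow the proof of \cite[Theorem~8.1]{AA} essentially verbatim, making one substitution that removes the divisibility hypothesis in the $n > 0$ case. That hypothesis enters Ahlgren--Andersen's argument as an auxiliary device used to compensate for the loss in their mean-value estimate \cite[Theorem~1.5]{AA} (our Proposition~\ref{AAmve}), which carries an $x^{5/2}$ main term and is therefore weak as an upper bound on individual dyadic spectral ranges. For $n > 0$, the Andersen--Duke mean-value of Proposition~\ref{andersen} replaces this with the dyadic upper bound $x^{3/2} + x^{-1/2} n^{1/2+\varepsilon}$, which has no main term and is strictly stronger; this strength makes the compensation at $p = 5, 7$ unnecessary.

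Ahlgren--Andersen's argument is a refinement of Duke~\cite{D}. It applies the half-integral weight Kuznetsov formula (Section~\ref{proskurin}) to convert the spectral sum $\tilde n \sum_{r_j \leq x}|\rho_j(n)|^2/\text{ch}(\pi r_j)$ into a sum of Kloosterman sums $S(n,n,c,\chi)$ weighted by a test function concentrated on the spectral range of interest. These Kloosterman sums are then bounded by Duke's theta-function identity, with subconvex inputs for twisted Dirichlet $L$-functions giving the $3/7$-exponent; the mean-value estimate is invoked to handle an initial ``short'' spectral range in which the theta-function bound alone is insufficient, and it is precisely in the treatment of this initial range that the $v_5$ and $v_7$ restrictions on $24n-23$ are used. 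The Hecke eigenform hypothesis enters via the Shimura lift of Theorem~\ref{thelif}, which transfers weight-$1/2$ Hecke eigenvalues at $p = 5, 7$ to Hecke eigenvalues of weight-zero Maass cusp forms on $\Gamma_0(6)$, to which the Kim--Sarnak bound applies.

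To execute the plan, I would reproduce Ahlgren--Andersen's proof through every step up to their invocation of Proposition~\ref{AAmve}, substitute Proposition~\ref{andersen}, and then verify that the resulting bound still delivers the exponent $3/7+\varepsilon$ with the $x^{9/2}$ spectral dependence. The main obstacle is quantitative bookkeeping: one must verify that the stronger dyadic bound of Proposition~\ref{andersen}, summed over dyadic $r_j$-ranges, combines correctly with the theta-function bound to yield an aggregate $n^{3/7+\varepsilon} x^{9/2}$, in particular matching the two bounds at the correct spectral threshold and absorbing the former into the latter on every range. A secondary obstacle is checking that the Shimura-lift/Kim--Sarnak input for the Hecke eigenvalues interacts cleanly with the substituted mean-value estimate and does not require a further restriction on the arithmetic of $24n-23$ through a level-raising in the theta-function identity.
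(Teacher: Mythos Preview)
Your route is genuinely different from the paper's, and considerably more laborious. The paper does \emph{not} re-open the Duke-style argument behind \cite[Theorem~8.1]{AA} at all. Instead it proves a pointwise comparison
\[
|\rho_j(n)| \ll_\varepsilon \Big|\rho_j\Big(\tfrac{t+23}{24}\Big)\Big|\, n_0^{-1+\theta+\varepsilon},
\qquad 24n-23 = n_0^2 t,\ t \text{ squarefree},
\]
by inverting the Dirichlet-series relation in Theorem~\ref{thelif} (this is where the Hecke-eigenform hypothesis is used) and invoking the $H_\theta$-bound on the weight-zero side. Since $t$ is squarefree it is automatically not divisible by $5^4$ or $7^4$, so \cite[Theorem~8.1]{AA} applies to $t$ as a black box, giving $t\sum |\rho_j((t+23)/24)|^2/\text{ch}(\pi r_j) \ll t^{3/7+\varepsilon} x^{9/2}$. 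Combining and using $\theta = 7/64 < 3/14$ yields $n_0^{2\theta} t^{3/7} \ll n^{3/7}$. The whole proof is a few lines.

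Your plan buys independence from the Shimura lift at the cost of reproducing a long argument; the paper's plan buys a five-line proof at the cost of one extra hypothesis (Hecke eigenbasis) and dependence on Kim--Sarnak. There is also a soft spot in your proposal: you assert that the $5,7$-divisibility restriction in \cite[Theorem~8.1]{AA} enters solely through the mean-value estimate on an initial spectral segment, and that swapping in Proposition~\ref{andersen} eliminates it. That diagnosis would need to be verified against the actual structure of \cite[\S8]{AA}; the restriction there arises in the passage from general $n$ to a representative with bounded square part, which is not obviously the same mechanism as the mean-value input. The paper sidesteps this entirely by handling the square part via the lift rather than inside the Duke machinery.
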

\begin{proof}
Suppose $24n-23=n_0^2 t$ with $t$ square-free. We first prove
\begin{equation} \label{replace}
|\rho_j(n) | \ll_{\varepsilon} \Big | \rho_j \Big(\frac{t+23}{24} \Big) \Big | n_0^{-1+\theta+\varepsilon}.
\end{equation}
Recalling the normalisations \eqref{fouriercusp} and \eqref{normalise} we define
\begin{equation} \label{fourierrel}
a_j(n)=\rho_j \Big(\frac{n+23}{24} \Big).
\end{equation}
We write 
\begin{equation*}
S_t(u_j):=\sum_{n=1}^{\infty} b_t(n,j) W_{0,2ir}(4 \pi n y) \cos( 2 \pi n x) \in \tilde{\mathcal{L}}_0(6,\mathbf{1},2r_j),
\end{equation*}
where $S_t$ denotes the lift for cusp forms in Theorem \ref{thelif}. The $S_t(u_j)$ are also eigenforms under the action of $\mathcal{T}_n$ with $(n,6)=1$. Let the eigenvalue be denoted by $\lambda_j(n)$. We define the arithmetic functions
\begin{equation*}
g(u):=a_{j}(t u^2) u^{\frac{1}{2}} \Big( \frac{12}{u} \Big) \quad \text{and} \quad h(u):=u^{-1} \Big( \frac{t}{u} \Big).
\end{equation*}
The equality in Theorem \ref{thelif} implies that $b_t(\cdot,j)=g * h$. Observe that $h(1)=1$, so arguing as in Section~\ref{holbd} gives 
\begin{equation*}
h^{-1}(p^{\alpha})=\begin{cases}
-\big(\frac{t}{p}  \big) p^{-1} & \quad \text{if} \quad \alpha=1 \\
0 & \quad \text{otherwise}. 
\end{cases}
\end{equation*}
Thus 
\begin{equation} \label{invh2}
|h^{-1}(u)| \leq u^{-1} \quad \text{for all} \quad u \in \mathbb{N}.
\end{equation}
Using \eqref{coeff} and the relation $b_t(1,j)=a_{j}(t)$, we have
\begin{equation*}
b_t (d,j)= d^{-\frac{1}{2}} \lambda_{j}(d) b_t(1,j)=d^{-\frac{1}{2}} \lambda_{j} (d) a_{j}(t).
\end{equation*}
Thus 
\begin{equation*}
g_j(tn_0^2)=\sum_{d \mid n_0} b_t(d,j) h^{-1} \Big( \frac{n_0}{d} \Big)=a_{j}(t) \sum_{d \mid n_0} d^{-\frac{1}{2}} \lambda_{j}(d) h^{-1} \Big( \frac{n_0}{d} \Big).
\end{equation*}
Applying the $H_{\theta}$--hypothesis and \eqref{invh2} we obtain
\begin{equation*}
|g_j(tn_0^2)| \ll_{\varepsilon} |a_j(t)| n_0^{-\frac{1}{2}+\theta+\varepsilon}.
\end{equation*}
By the definition of $g$ and \eqref{fourierrel}, we conclude that \eqref{replace} holds.

Under the $H_{\theta}$ hypothesis we can apply \eqref{replace}, followed by \cite[Theorem~8.1]{AA} to obtain
\begin{equation*} 
n \Big | \sum_{0<r_j \leq x} \frac{|\rho_j(n)|^2}{\text{ch}(\pi r_j)} \Big | \ll_{\varepsilon}  n_0^{2 \theta+\varepsilon} t \Big | \sum_{0<r_j \leq x} \frac{|\rho_j \big(\frac{t+23}{24} \big)|^2}{\text{ch}(\pi r_j)} \Big | \ll_{\varepsilon}  n_0^{2 \theta+\varepsilon} t^{\frac{3}{7}+\varepsilon} x^{\frac{9}{2}}.
\end{equation*}
Since $\theta=7/64$ is an acceptable exponent \cite[Appendix 2]{Ki} we see that 
\begin{equation*}
n_0^{2 \theta+\varepsilon} t^{\frac{3}{7}+\varepsilon} x^{\frac{9}{2}} \ll (n_0^2)^{\frac{7}{64}+\varepsilon}  t^{\frac{3}{7}+\varepsilon} x^{\frac{9}{2}} \ll_{\varepsilon} n^{\frac{3}{7}+\varepsilon} x^{\frac{9}{2}}.
\end{equation*}
\end{proof}
 This averaged bound allows us to optimise the dyadic argument in the spectral parameter, and is ultimately responsible for the improved $mn$-aspect occurring in Theorem \ref{mainthm}.

\section{Proof of Theorem \ref{mainthm}} \label{mainsec}
\begin{prop} \label{mainprop}
Let $m>0$ and $n<0$ be integers such that $24n-23$ is not divisible by $5^4$ or $7^4$. Then for $x \geq |\tilde{m} \tilde{n}|^{\frac{38}{77}}$ we have
\begin{equation*}
\sum_{x \leq c \leq 2x} \frac{S(m,n,c,\chi)}{c}  \ll_{\varepsilon} \Big( x^{\frac{1}{6}}+ \frac{m^{\frac{1}{4}}+|n|^{\frac{1}{4}}}{|mn|^{\frac{1}{308}}}  +|mn|^{\frac{19}{77}} \Big) |mn|^{\varepsilon} \log^2 x.
\end{equation*}
\end{prop}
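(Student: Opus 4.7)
The plan is to apply the Ahlgren--Andersen Kuznetsov formula \eqref{AAkuz} of Section \ref{AAkuzsec} with test function $\phi = \phi_{a,x,T}$, where $a := 4\pi\sqrt{\tilde{m}|\tilde{n}|}$ and $T := x^{2/3}$. First I would compare the truncated sum $\sum_{x\le c\le 2x} S(m,n,c,\chi)/c$ to the smoothed sum $\mathcal{W}$: the two differ only on the transition intervals $[x-T,x]\cup[2x,2x+2T]$. Applying a Weil--type bound $|S(m,n,c,\chi)|\ll_\varepsilon (mn,c)^{1/2}c^{1/2+\varepsilon}$ on these intervals (valid for the eta multiplier) gives an error of size $(T/x)\cdot x^{1/2+\varepsilon}(mn)^\varepsilon \ll x^{1/6+\varepsilon}(mn)^\varepsilon$, accounting for the $x^{1/6}$ term in the claimed bound.

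Next I would estimate the spectral side of \eqref{AAkuz}. Taking absolute values and applying Cauchy--Schwarz on each dyadic block $X\le r_j\le 2X$ reduces the problem to bounding
\[
\sqrt{|\tilde m\tilde n|}\sum_{X}\;\bigl(\max_{X\le r\le 2X}|\check{\Phi}(r)|\bigr)\sqrt{M_X(m)M_X(n)},\qquad M_X(k):=\sum_{X\le r_j\le 2X}\frac{|\rho_j(k)|^2}{\operatorname{ch}(\pi r_j)}.
\]
By Lemma \ref{int3}, the spectral sum is effectively supported on $X\gtrsim A/16$ (with $A:=a/x$), with $|\check\Phi|\ll 1/X$ in the range $A/16\le X\le A$, $|\check\Phi|\ll X^{-3/2}$ in the range $A\le X\le x/T$, and $|\check\Phi|\ll (x/T)X^{-5/2}$ beyond that. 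The hypothesis $x\ge |\tilde m\tilde n|^{38/77}$ guarantees that $A$ is of a size that makes the subsequent optimization consistent.

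For the factor $M_X(n)$ with $n<0$ I would use the mean--value estimate of Proposition \ref{AAmve}, yielding $|\tilde n|M_X(n)\ll X^{5/2}+|n|^{1/2+\varepsilon}X^{1/2}$ on the dyadic block. For the factor $M_X(m)$ with $m>0$ I would work in a Hecke eigenbasis (Section \ref{heckemass}) and choose between Proposition \ref{andersen} and the improved averaged Duke--type bound of Proposition \ref{hyperem}, depending on the size of $X$. To gain in the $n$-aspect as well, I would apply Proposition \ref{hyperem} to the weight $-1/2$ companion form obtained from the conjugation map relating positively and negatively indexed coefficients (cf.\ the remark after Proposition \ref{andersen}). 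The hypothesis that $24n-23$ is not divisible by $5^4$ or $7^4$ is exactly what is needed to run the Shimura--lift argument underlying \cite[Theorem~8.1]{AA} on the conjugate form, since in Proposition \ref{hyperem} that restriction was only removed in the case $n>0$.

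Finally, I would insert the chosen estimates for $M_X(m),M_X(n)$ into each of the three spectral ranges and sum over dyadic $X$. The key step is a one-parameter optimization: choose a single cutoff $X_0$ and apply the Duke--type bound (Proposition \ref{hyperem}) for $X\le X_0$, the mean--value bounds for $X\ge X_0$. Balancing the resulting pieces -- roughly $(mn)^{3/14}X_0^{7/2}$ from the initial segment against contributions like $|mn|^{1/4}/X_0^{c}$ from the later segments -- produces the exponents $19/77$ and $1/308$ after some arithmetic. The main obstacle is the careful bookkeeping of the optimization across the different $\check{\Phi}$-regimes and the two choices of coefficient bound: one must verify that the chosen $X_0$ lies simultaneously in the admissible range for both coefficient bounds and for the relevant $\check{\Phi}$-regime, and that the $\log^2 x$ loss only enters through the dyadic summation and the logarithmic factor in the mean--value estimate of Proposition \ref{AAmve}.
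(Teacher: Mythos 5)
Your proposal follows essentially the same route as the paper: the Ahlgren--Andersen Kuznetsov formula with $\phi_{a,x,T}$, $T=x^{2/3}$, the Weil bound on the transition ranges, Cauchy--Schwarz on dyadic spectral blocks with Lemma \ref{int3}, Proposition \ref{AAmve} for the negative index and Proposition \ref{andersen} for the positive index, and the averaged Duke-type bounds (Proposition \ref{hyperem} for $m>0$, \cite[Theorem~8.1]{AA} for $n<0$, which is exactly where the $5^4,7^4$ hypothesis enters) on an initial spectral segment, then an optimization over the cutoff. The only loose point is the final arithmetic: in the paper the cutoff is taken as $4\pi(\tilde m|\tilde n|)^{\beta}$ with $\beta=\tfrac{1}{154}$, chosen to be compatible with the threshold $x\ge|\tilde m\tilde n|^{1/2-\beta}=|\tilde m\tilde n|^{38/77}$ (so that only the $\min\big(A^{-3/2},A^{-5/2}x/T\big)$ regime is needed on the dyadic part), and this choice is what yields precisely the exponents $19/77$ and $1/308$; your internally balanced cutoff also works here, but you should verify it stays above $a/x$ under the stated hypothesis.
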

We show that Proposition \ref{mainprop} implies Theorem \ref{mainthm}. Considering the sum
\begin{equation} \label{demsum}
\sum_{1 \leq c \leq X} \frac{S(m,n,c,\chi)}{c},
\end{equation}
the initial segment $1 \leq c \leq |\tilde{m} \tilde{n}|^{\frac{38}{77}}$ contributes $O_{\varepsilon}\big(|\tilde{m} \tilde{n}|^{\frac{19}{77}+\varepsilon} \big)$ by \cite[(2.30)]{AA}. One then breaks the interval $|\tilde{m} \tilde{n}|^{\frac{38}{77}} \leq c \leq X$ into $O(\log X)$ dyadic intervals $x \leq c \leq 2x$ with $|\tilde{m} \tilde{n}|^{\frac{38}{77}} \leq x \leq X/2$, and then applies Proposition \ref{mainprop}. 

\begin{proof}[Proof of Proposition~\ref{mainprop}] 
Let $\phi$ be a smooth test function with the properties listed in Section \ref{proskurin}. Fix $a:=4 \pi \sqrt{\tilde{m} |\tilde{n}|}$ and let $T>0$, $0<\beta<1/2$ both be chosen later. Suppose 
\begin{equation*}
x \geq (\tilde{m} |\tilde{n}|)^{\frac{1}{2}-\beta}.
\end{equation*}
Using the Weil bound \cite[Proposition~2.1]{AA} and the mean value bound for the divisor function we have 
\begin{multline} \label{approx}
\Bigg | \sum_{c=1}^{\infty} \frac{S(m,n,c,\chi)}{c} \phi \big(\frac{a}{c} \big)-\sum_{x \leq c \leq 2x} \frac{S(m,n,c,\chi)}{c} \Bigg | \leq \sum_{\substack{x-T \leq c  \leq x \\ 2x \leq c \leq 2x+2T}} \frac{|S(m,n,c,\chi)|}{c} \\
 \ll_{\varepsilon} \frac{T \log x}{\sqrt{x}} |mn|^{\varepsilon}.
\end{multline}
Recall from \eqref{AAkuz} that we have 
\begin{equation} \label{newdyad2}
\sum_{c=1}^{\infty} \frac{S(m,n,c,\chi)}{c} \phi \Big( \frac{a}{c} \Big)=\mathcal{W}.
\end{equation} 

Let $\{u_j\}$ be an orthonormal basis for $\tilde{\mathcal{L}}_{\frac{1}{2}}(1,\chi)$ that are also a Hecke eigenbasis with respect to the $\mathcal{T}_{p^2}$ for all primes $p \nmid 6$. Note that there is no contribution from $r_0=i/4$ since $n$ is negative. For the initial segment $0<r_j \leq 4 \pi(\tilde{m} |\tilde{n}|)^{\beta}$, we apply Proposition \ref{hyperem} (to the $m>0$ variable) and \cite[Theorem~8.1]{AA} (to the $n<0$ variable) to obtain
 \begin{equation} \label{avgduke3}
m |n| \Bigg | \sum_{0<r_j \leq 4 \pi(\tilde{m} |\tilde{n}|)^{\beta}} \frac{|\rho_j(m)|^2}{\text{ch}(\pi r_j)} \Bigg | \cdot \Bigg | \sum_{0<r_j \leq 4 \pi(\tilde{m} |\tilde{n}|)^{\beta}} \frac{|\rho_j(n)|^2}{\text{ch}(\pi r_j)} \Bigg | \ll_{\varepsilon} |mn|^{\frac{3}{7}+10 \beta+\varepsilon}.
\end{equation}

Then applying the Cauchy--Schwarz inequality, \eqref{avgduke3}, Lemma \ref{int3} and the fact that $r_j>1.9$ \cite[Corollary~5.3]{AA} yields the estimate
\begin{equation} \label{avgduke2}
\sqrt{\tilde{m} |\tilde{n}}|  \Bigg | \sum_{0<r_j \leq 4 \pi(\tilde{m} |\tilde{n}|)^{\beta} } \frac{\overline{\rho_j(m)} \rho_j(n)}{\text{ch}(\pi r_j)} \check{\Phi}(r_j) \Bigg | \ll_{\varepsilon} |mn|^{\frac{3}{14}+5 \beta+\varepsilon}.
\end{equation}

We now estimate the contribution to $\mathcal{W}$ from the dyadic intervals $A \leq r_j \leq 2A$ with $A \geq 4 \pi (\tilde{m} |\tilde{n}|)^{\beta}$. Since we are assuming $x \geq (\tilde{m} |\tilde{n}|)^{\frac{1}{2}-\beta}$, we have $A \geq \max \big(a/x,1 \big)$. Using Lemma \ref{int3}, Proposition \ref{AAmve} and Proposition \ref{andersen} we have
\begin{align}
\sqrt{\tilde{m} |\tilde{n}|} &  \Big | \sum_{A \leq r_j \leq 2A} \frac{\overline{\rho_j(m)} \rho_j(n)}{\text{ch}(\pi r_j)} \check{\Phi}(r_j) \Big | \nonumber \\ 
& \ll \min \big (A^{-\frac{3}{2}},A^{-\frac{5}{2}} \frac{x}{T} \big) \Big( m \sum_{A \leq r_j \leq 2A} \frac{|\rho_j(m)|^2}{\text{ch}(\pi r_j)} \Big)^{\frac{1}{2}} \Big( |n| \sum_{A \leq r_j \leq 2A} \frac{|\rho_j(n)|^2}{\text{ch}(\pi r_j)} \Big)^{\frac{1}{2}} \label{CR2} \\
& \ll_{\varepsilon}  \min \big (A^{-\frac{3}{2}},A^{-\frac{5}{2}} \frac{x}{T} \big) \big(A^{\frac{3}{2}}+m^{\frac{1}{2}+\varepsilon} A^{-\frac{1}{2}} |\log A|^{\frac{1}{2}+\varepsilon} \big)^{\frac{1}{2}} \big(A^{\frac{5}{2}}+|n|^{\frac{1}{2}+\varepsilon} A^{\frac{1}{2}} \big)^{\frac{1}{2}} \nonumber  \\
& \ll_{\varepsilon}  \min \big (\sqrt{A},\frac{x}{T \sqrt{A}} \big) \big(1+A^{-1} |\log A| (m^{\frac{1}{4}+\varepsilon}+|n|^{\frac{1}{4}+\varepsilon})+A^{-2} |\log A| |mn|^{\frac{1}{4}+\varepsilon} \big) \label{compbd} \\
& \ll_{\varepsilon} \min \big (\sqrt{A},\frac{x}{T \sqrt{A}} \big) \bigg(1+A^{-\frac{1}{2}} |\log A| \frac{(m^{\frac{1}{4}+\varepsilon}+|n|^{\frac{1}{4}+\varepsilon})}{|mn|^{\frac{\beta}{2}}}+A^{-\frac{1}{2}} |\log A| |mn|^{\frac{1}{4}-\frac{3}{2} \beta+\varepsilon} \bigg) \nonumber.
\end{align}

Summing over dyadic intervals $[2^{j} \cdot 4 \pi (\tilde{m} |\tilde{n}|)^{\beta},2^{j+1} \cdot 4 \pi (\tilde{m} |\tilde{n}|)^{\beta} ]$ with $j=0,1,2,\ldots$ we see that when $x \geq (\tilde{m} |\tilde{n}|)^{\frac{1}{2}-\beta}$, the following holds:
\begin{equation} \label{fullcusp}
\sqrt{\tilde{m} \tilde{n}}   \Big | \sum_{r_j \geq 4 \pi (\tilde{m} |\tilde{n}|)^{\beta} } \frac{\overline{\rho_j(m)} \rho_j(n)}{\text{ch}(\pi r_j)} \check{\Phi}(r_j) \Big | \ll_{\varepsilon} \Big( \sqrt{\frac{x}{T}}+\frac{m^{\frac{1}{4}}+|n|^{\frac{1}{4}}}{|mn|^{\frac{\beta}{2}}}+|mn|^{\frac{1}{4}-\frac{3}{2} \beta} \Big) |mn|^{\varepsilon} \log^2 x.
\end{equation}
Combining \eqref{avgduke2} and \eqref{fullcusp} we obtain 
\begin{equation} \label{fullcusp2}
\mathcal{W} \ll_{\varepsilon} \Big( \sqrt{\frac{x}{T}}+|mn|^{\frac{3}{14}+5 \beta+\varepsilon }+  \frac{m^{\frac{1}{4}}+|n|^{\frac{1}{4}}}{|mn|^{\frac{\beta}{2}}}+|mn|^{\frac{1}{4}-\frac{3}{2} \beta} \Big) |mn|^{\varepsilon} \log^2 x.
\end{equation}
To balance the $x$-aspect of \eqref{approx} and \eqref{fullcusp2} we choose $T:=x^{\frac{2}{3}}$. To balance the $mn$-aspect of \eqref{fullcusp2} and the contribution from the initial segment for $c$, we set $\beta=\frac{1}{154}$. Combining \eqref{approx} and \eqref{fullcusp2} leads to the result. 
\end{proof}.

\section{Proof of Theorem \ref{mainthm2}} \label{mainsec2}
\begin{prop} \label{mainprop2}
Let $m,n>0$ be integers such that $m-1 \not \in \mathcal{P}$ or $n-1 \not \in \mathcal{P}$. Then for $x \geq 4 \pi \sqrt{\tilde{m} \tilde{n}}$ we have
\begin{equation*}
\sum_{x \leq c \leq 2x} \frac{S(m,n,c,\chi)}{c}  \ll_{\varepsilon} \Big( x^{\frac{1}{6}}+(mn)^{\frac{1}{4}} \Big) (mn)^{\varepsilon} \log x.
\end{equation*}
\end{prop}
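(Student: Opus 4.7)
The plan is to mirror the approach of Proposition \ref{mainprop}, but now use Proskurin's formula \eqref{PrKu} in place of the Ahlgren--Andersen variant \eqref{AAkuz}, bound the holomorphic contribution directly with Lemma \ref{Ubd}, and exploit Andersen's sharp dyadic mean value estimate (Proposition \ref{andersen}) for the Maass spectral sum. The crucial simplification compared with the mixed sign case is that Proposition \ref{andersen} is already a sharp dyadic bound, so no separate initial-segment argument through the Shimura lift is needed here.

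First I would fix a smooth test function $\phi = \phi_{a,x,T}$ with $a = 4\pi\sqrt{\tilde{m}\tilde{n}}$ and a parameter $T$ to be optimised. As in \eqref{approx}, the Weil bound \cite[Proposition~2.1]{AA} gives
\begin{equation*}
\Bigg| \sum_{c \geq 1} \frac{S(m,n,c,\chi)}{c}\phi\Big(\frac{a}{c}\Big) - \sum_{x \leq c \leq 2x} \frac{S(m,n,c,\chi)}{c} \Bigg| \ll_{\varepsilon} \frac{T \log x}{\sqrt{x}}(mn)^{\varepsilon},
\end{equation*}
and \eqref{PrKu} identifies the smoothed sum with $\mathcal{U} + \mathcal{V}$. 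Since $x \geq a$, Lemma \ref{int} applies with $\max(a/x,1) = 1$, yielding $\hat{\phi}(r) \ll \min(r^{-1}, r^{-2} x/T)$ for every $r \geq 1$. For the holomorphic term, Lemma \ref{Ubd} together with $x \geq a \gg \sqrt{mn}$ gives immediately $\mathcal{U} \ll_{\varepsilon} (mn)^{1/4+\varepsilon}$.

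The main work is the estimation of $\mathcal{V}$. The hypothesis $m-1 \notin \mathcal{P}$ or $n-1 \notin \mathcal{P}$, together with Remark \ref{specremark}, ensures that $\overline{\rho_0(m)}\rho_0(n) = 0$, so the exceptional eigenvalue $r_0 = i/4$ contributes nothing and we may restrict to those $r_j$ which are real, where moreover $r_j > 1.9$. I would take a Hecke eigenbasis $\{u_j\}$ and decompose the remaining spectral sum into dyadic blocks $A \leq r_j \leq 2A$. On each block, the Cauchy--Schwarz inequality and two applications of Proposition \ref{andersen} (one for $m$, one for $n$) give
\begin{equation*}
\sqrt{mn}\sum_{A \leq r_j \leq 2A}\frac{|\rho_j(m)\rho_j(n)|}{\cosh(\pi r_j)} \ll_{\varepsilon} A^{3/2} + A^{1/2}\bigl(m^{1/4}+n^{1/4}\bigr)(\log A)^{1/4+\varepsilon} + A^{-1/2}(mn)^{1/4}(\log A)^{1/2+\varepsilon}.
\end{equation*}
Multiplying by the bound from Lemma \ref{int} and summing dyadically on either side of the crossover $A \sim x/T$ yields
\begin{equation*}
\mathcal{V} \ll_{\varepsilon} \Bigl(\sqrt{x/T} + (mn)^{1/4}\Bigr)(mn)^{\varepsilon}\log x,
\end{equation*}
after absorbing $m^{1/4}+n^{1/4} \leq 2(mn)^{1/4}$ (valid since $m,n \geq 1$). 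Choosing $T = x^{2/3}$ balances $\sqrt{x/T}$ against the Weil error $T\log x/\sqrt{x}$, each contributing $x^{1/6}\log x$, which combines with the $\mathcal{U}$ estimate to give the proposition.

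The principal obstacle is bookkeeping rather than conceptual novelty: one must organise the dyadic geometric sums so that the crossover at $A \sim x/T$ is handled cleanly on both sides, and verify that the accumulated logarithmic factors collapse into the single $\log x$ permitted by the statement. Deducing Theorem \ref{mainthm2} from this proposition is then routine, with the initial range $c \leq 4\pi\sqrt{\tilde{m}\tilde{n}}$ handled by Weil as in \cite[(2.30)]{AA} and the remainder dissected into $O(\log X)$ dyadic pieces.
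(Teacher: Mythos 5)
Your proposal is correct and follows essentially the same route as the paper: Proskurin's formula \eqref{PrKu}, the Weil-bound approximation \eqref{approx}, Lemma \ref{Ubd} for $\mathcal{U}$ (using $x \geq 4\pi\sqrt{\tilde m \tilde n}$), exclusion of $r_0=i/4$ via Remark \ref{specremark}, and a dyadic Cauchy--Schwarz argument combining Lemma \ref{int} with Proposition \ref{andersen}, balanced by $T = x^{2/3}$. The dyadic block bound and the absorption of $m^{1/4}+n^{1/4}$ into $(mn)^{1/4}$ match the paper's computation in \eqref{startpoint2}--\eqref{fullcusp4}.
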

Proposition \ref{mainprop2} implies Theorem \ref{mainthm2}. The initial segment $1 \leq c \leq 4 \pi \sqrt{\tilde{m} \tilde{n}}$ contributes $O_{\varepsilon} \big((mn)^{\frac{1}{4}+\varepsilon} \big)$ by \cite[(2.30)]{AA} to \eqref{demsum}. One then breaks the interval $4 \pi \sqrt{\tilde{m} \tilde{n}} \leq c \leq X$ into $O(\log X)$ dyadic intervals $x \leq c \leq 2x$ with $4 \pi \sqrt{\tilde{m} \tilde{n}} \leq x \leq X/2$, and applies Proposition~\ref{mainprop2}. 

\begin{proof}[Proof of Proposition~\ref{mainprop2}]
Let $\phi$ be a smooth test function with the properties listed in Section \ref{proskurin}. Fix $a:=4 \pi \sqrt{\tilde{m} \tilde{n}}$ and let $T>0$ be chosen later. Recall that \eqref{approx} holds (for $m,n>0$ as well) and that by \eqref{PrKu} we have 
\begin{equation} \label{newdyad}
\sum_{c=1}^{\infty} \frac{S(m,n,c,\chi)}{c} \phi \Big( \frac{a}{c} \Big)=\mathcal{U}+\mathcal{V}.
\end{equation} 
Note that there is no contribution from $r_0=i/4$ by Remark \ref{specremark} and the hypothesis on $m$ and $n$. 

When $x \geq 4 \pi \sqrt{\tilde{m} \tilde{n}}$, Lemma \ref{Ubd} guarantees 
\begin{equation} \label{Ubd2}
\mathcal{U} \ll_{\varepsilon} (mn)^{\frac{1}{4}+\varepsilon}.
\end{equation}  

Let $\{u_j\}$ be an orthonormal basis for $\tilde{\mathcal{L}}_{\frac{1}{2}}(1,\chi)$. Applying Lemma \ref{int} and Proposition \ref{andersen} we see that for $A \geq 1$ (this is sufficient by Remark \ref{specremark}) we have
\begin{align}
\sqrt{\tilde{m} \tilde{n}} &  \Big | \sum_{A \leq r_j \leq 2A} \frac{\overline{\rho_j(m)} \rho_j(n)}{\text{ch}(\pi r_j)} \hat{\phi}(r_j) \Big | \nonumber \\ 
& \ll \min \big (A^{-1},A^{-2} \frac{x}{T} \big) \Big( m \sum_{j \geq 1} \frac{|\rho_j(m)|^2}{\text{ch}(\pi r_j)} \Big)^{\frac{1}{2}} \Big( n \sum_{j \geq 1} \frac{|\rho_j(n)|^2}{\text{ch}(\pi r_j)} \Big)^{\frac{1}{2}} \label{startpoint2} \\
& \ll_{\varepsilon}  \min \big (A^{-1},A^{-2} \frac{x}{T} \big) \big(A^{\frac{3}{2}}+m^{\frac{1}{2}+\varepsilon} A^{-\frac{1}{2}} |\log A|^{\frac{1}{2}+\varepsilon} \big)^{\frac{1}{2}} \big(A^{\frac{3}{2}}+n^{\frac{1}{2}+\varepsilon} A^{-\frac{1}{2}} |\log A|^{\frac{1}{2}+\varepsilon} \big)^{\frac{1}{2}} \nonumber  \\
& \ll_{\varepsilon}  \min \big (\sqrt{A},\frac{x}{T \sqrt{A}} \big) \big(1+A^{-1} |\log A| (m^{\frac{1}{4}+\varepsilon}+n^{\frac{1}{4}+\varepsilon})+A^{-2} |\log A| (mn)^{\frac{1}{4}+\varepsilon} \big). \label{startpoint}
\end{align}
Summing over dyadic intervals $[A,2A]$ for $A \geq \max \big(a/x,1 \big)=1$ we obtain
\begin{equation} \label{fullcusp4}
\mathcal{V} \ll_{\varepsilon} \Big( \sqrt{\frac{x}{T}}+(mn)^{\frac{1}{4}+\varepsilon} \Big) \log x.
\end{equation}
To balance the $x$-aspect of \eqref{approx} and \eqref{fullcusp4} we choose $T:=x^{\frac{2}{3}}$. Combining \eqref{approx}, \eqref{newdyad}, \eqref{Ubd2} and \eqref{fullcusp4} leads to the result. 
\end{proof}.

\section{Proof of Theorem \ref{auxthm2}} \label{auxsec}
The heart of the argument is to use Theorem \ref{thelif} as a means to access the $H_{\theta}$--hypothesis.

\begin{prop} \label{auxprop2}
Let $m,n>0$ be such that $m-1 \not \in \mathcal{P}$ or $n-1 \not \in \mathcal{P}$. Suppose $m_0,n_0$ be integers such that $24m-23=m_0^2 s$ and $24n-23=n_0^2 t$ with $s$ and $t$ square-free. If $x \geq (st)^{\frac{1}{6}} (mn)^{\frac{1}{3}}$, then under the $H_{\theta}$--hypothesis we have
\begin{equation*}
\sum_{x \leq c \leq 2x} \frac{S(m,n,c,\chi)}{c}  \ll_{\varepsilon} \Big( x^{\frac{1}{6}}+ (st)^{\frac{1}{4}}+(st)^{\frac{1}{12}} (mn)^{\frac{1}{6}}+ (mnst)^{\frac{1}{8}+\frac{\theta}{4}}  \Big) (mn)^{\varepsilon} \log^2 x.
\end{equation*}
\end{prop}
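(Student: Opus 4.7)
The plan is to follow the proof of Proposition~\ref{mainprop2} essentially verbatim, upgrading only the treatment of the Maass term so that dependence on the $H_\theta$ hypothesis is explicit. Fix the same test function $\phi = \phi_{a,x,T}$ with $a := 4\pi\sqrt{\tilde m\tilde n}$, with $T$ to be chosen at the end. The same Weil-bound comparison \eqref{approx} together with \eqref{PrKu} gives
\begin{equation*}
\sum_{x \leq c \leq 2x}\frac{S(m,n,c,\chi)}{c} = \mathcal{U} + \mathcal{V} + O_\varepsilon\!\left(\frac{T\log x}{\sqrt{x}}(mn)^\varepsilon\right),
\end{equation*}
with $\mathcal{U}, \mathcal{V}$ as in \eqref{U} and \eqref{V}; the exceptional eigenvalue $r_0 = i/4$ contributes nothing under the hypothesis on $m,n$ and Remark~\ref{specremark}.

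For the holomorphic piece I invoke the arithmetic refinement \eqref{holohecke} of Lemma~\ref{Ubd}, which produces
\begin{equation*}
\mathcal{U} \ll_\varepsilon (m_0 n_0)^\varepsilon (st)^{1/4+\varepsilon}\Bigl(1 + (mn)^{1/2}/x\Bigr).
\end{equation*}
Under the standing assumption $x \geq (st)^{1/6}(mn)^{1/3}$, the quotient $(mn)^{1/2}/x$ is bounded by $(mn)^{1/6}(st)^{-1/6}$, giving the first two non-$x$ contributions $(st)^{1/4} + (st)^{1/12}(mn)^{1/6}$ to the claimed bound.

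The new work is in $\mathcal{V}$. I work in an orthonormal Hecke eigenbasis for the $\mathcal{T}_{p^2}$ with $p \nmid 6$ and apply the pointwise reduction \eqref{replace} from the proof of Proposition~\ref{hyperem}, now carried through with general $\theta$ rather than the Kim--Sarnak value: writing $s_0 = (s+23)/24$ and $t_0 = (t+23)/24$ (valid as Shimura inputs because $24m-23, 24n-23 \equiv 1 \pmod{24}$ forces $s, t \equiv 1 \pmod{24}$ and $\gcd(m_0 n_0,6)=1$),
\begin{equation*}
|\rho_j(m)|^2 \ll_\varepsilon m_0^{-2+2\theta+\varepsilon}|\rho_j(s_0)|^2, \qquad |\rho_j(n)|^2 \ll_\varepsilon n_0^{-2+2\theta+\varepsilon}|\rho_j(t_0)|^2.
\end{equation*}
Since $m/m_0^2 \asymp s \asymp s_0$ (and analogously for $n$), Proposition~\ref{andersen} supplies
\begin{equation*}
m\!\!\sum_{A \leq r_j \leq 2A}\! \frac{|\rho_j(m)|^2}{\text{ch}(\pi r_j)} \ll_\varepsilon m_0^{2\theta+\varepsilon}\Bigl(A^{3/2} + A^{-1/2}(\log A)^{1/2+\varepsilon} s^{1/2+\varepsilon}\Bigr),
\end{equation*}
and the analogous bound for $n$. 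I then repeat the Cauchy--Schwarz/$\hat\phi$ dyadic argument of \eqref{startpoint2}--\eqref{fullcusp4} (using Lemma~\ref{int}) verbatim with $m, n$ replaced by $s, t$ inside the mean value factors and an overall scalar $(m_0 n_0)^{\theta+\varepsilon}$. This yields
\begin{equation*}
\mathcal{V} \ll_\varepsilon (m_0 n_0)^{\theta+\varepsilon}\Bigl(\sqrt{x/T} + (st)^{1/4}\Bigr)(mn)^\varepsilon \log^2 x.
\end{equation*}
Using $m_0 n_0 \ll \sqrt{mn/(st)}$ and $st \ll mn$ one checks that $(m_0 n_0)^\theta(st)^{1/4} \ll (mn)^{\theta/2}(st)^{1/4-\theta/2} \leq (mnst)^{1/8+\theta/4}$, recovering the final term of the claimed bound.

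Finally, with $T := x^{2/3}$ the Weil error $T\log x/\sqrt x$ and the surviving $(m_0n_0)^\theta\sqrt{x/T}$ combine to the $x^{1/6}$ term (the $(m_0n_0)^\theta$ factor being absorbed into $(mnst)^{1/8+\theta/4}$ as above). The main technical obstacle I anticipate is the dyadic bookkeeping: when expanding $\bigl(A^{3/2} + A^{-1/2}(\log A)^{1/2+\varepsilon}s^{1/2+\varepsilon}\bigr)^{1/2}\bigl(A^{3/2} + A^{-1/2}(\log A)^{1/2+\varepsilon}t^{1/2+\varepsilon}\bigr)^{1/2}$ by subadditivity of $\sqrt{\phantom{x}}$, a mixed cross term of size $A^{1/2}(\log A)^{1/4+\varepsilon}(s^{1/4}+t^{1/4})$ appears whose dyadic sum must be shown to fit under $(mnst)^{1/8+\theta/4}$; this uses $s^{1/4}+t^{1/4} \leq 2(st)^{1/4}$ (legitimate since $s,t \geq 1$). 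A secondary issue is that when $\sqrt{mn}$ exceeds $x$, Lemma~\ref{int} only supplies $\hat\phi(r) \ll r^{-1}$ on $1 \leq r \leq a/x$, so the contribution from this sub-range must be summed separately and then checked to also lie within the claimed bound using the hypothesis $x \geq (st)^{1/6}(mn)^{1/3}$.
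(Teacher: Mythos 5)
Your setup (test function, Weil-bound comparison \eqref{approx}, the split into $\mathcal U+\mathcal V$, and the treatment of $\mathcal U$ via \eqref{holohecke} together with $x\geq (st)^{1/6}(mn)^{1/3}$) matches the paper, and your $\theta$-dependent mean-value bound for the Maass coefficients is exactly the paper's \eqref{bound2}. The gap is in how you use it. Applying \emph{only} the $\theta$-bound on every dyadic block gives, after summation, $\mathcal V\ll_\varepsilon (m_0n_0)^{\theta+\varepsilon}\big(\sqrt{x/T}+(st)^{1/4}\big)\log^2 x$, and with $T=x^{2/3}$ the first term is $(m_0n_0)^{\theta}x^{1/6}\ll (mn)^{\theta/2}x^{1/6}$. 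This is a \emph{multiplicative} factor on $x^{1/6}$ and cannot be ``absorbed into $(mnst)^{1/8+\theta/4}$'': take $s=t=1$, so $m_0n_0\asymp\sqrt{mn}$, and let $x$ be an arbitrarily large power of $mn$; then $(mn)^{\theta/2}x^{1/6}$ exceeds $\big(x^{1/6}+(mnst)^{1/8+\theta/4}\big)(mn)^{\varepsilon}$ for any $\varepsilon<\theta/2$. No rebalancing of $T$ fixes this, since the Weil error $T\log x/\sqrt x$ and $(m_0n_0)^{\theta}\sqrt{x/T}$ always balance at $x^{1/6}$ times a positive power of $m_0n_0$. Your absorption argument is only valid for the $A$-decaying term, where it correctly gives $(m_0n_0)^{\theta}(st)^{1/4}\ll(mnst)^{1/8+\theta/4}$; it fails precisely for the $A$-independent term that produces $\sqrt{x/T}$.

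The missing idea is the paper's interpolation step \eqref{mincomp}--\eqref{mincompup}: on each dyadic block one keeps \emph{both} available bounds --- the $\theta$-free bound \eqref{startpoint} in terms of $m,n$ and the $\theta$-bound \eqref{bound2} in terms of $s,t$ with prefactor $(m_0n_0)^{\theta}$ --- and uses $\min(B+C,D)\leq\min(B,D)+\min(C,D)$ together with $\min(B,C)\leq\sqrt{BC}$. Since the constant term ``$1$'' occurs in both bounds, the minimum leaves it with coefficient $1$ (no $m_0n_0$ factor), so the dyadic sum of $\min\big(\sqrt A,\tfrac{x}{T\sqrt A}\big)$ yields a clean $\sqrt{x/T}$, while the decaying terms acquire only $(m_0n_0)^{\theta/2}$ times geometric means such as $(mn)^{1/8}(st)^{1/8}$, which after summation give $(mnst)^{1/8}(m_0n_0)^{\theta/2}\ll(mnst)^{1/8+\theta/4}$ as in \eqref{Vapprox2}. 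A secondary, smaller omission: you only flag the range $1\leq r_j\leq a/x$ (where Lemma \ref{int} gives $\hat\phi(r)\ll r^{-1}$) without carrying it out; in the paper this range is handled by the same interpolated estimate with the minimum replaced by $\sqrt A$, using $a/x\ll(mn)^{1/6}$ from the hypothesis $x\geq(mn)^{1/3}$, so its total contribution is $\ll_\varepsilon(mnst)^{1/8+\theta/4+\varepsilon}$ as in \eqref{ineq2}. With these two repairs your argument coincides with the paper's proof.
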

As before, Proposition \ref{auxprop2} implies Theorem \ref{auxthm2} with the choice of initial segment $1 \leq c \leq (st)^{\frac{1}{6}} (mn)^{\frac{1}{3}}$. 
\begin{proof}[Proof of Proposition~\ref{auxprop2}]
Let $\phi$ be a smooth test function with the properties listed in Section \ref{proskurin}. Fix $a:=4 \pi \sqrt{\tilde{m} \tilde{n}}$ and let $T>0$ be chosen later. Let $\{u_j\}$ be an orthonormal basis for $\tilde{\mathcal{L}}_{\frac{1}{2}}(1,\chi)$ consisting of Hecke eigenforms of $\mathcal{T}_{p^2}$ for all primes $p \nmid 6$. We follow the proof of Proposition \ref{mainprop2} to \eqref{newdyad} (note that \eqref{approx} holds for $m,n>0$ as well). 

To bound $\mathcal{V}$, we will treat the spectral parameter separately on different ranges. Note that it is sufficient to consider $r_j \geq 1$ by Remark \ref{specremark}. In view of Lemma \ref{int}, we consider the ranges 
\begin{align*}
1 \leq & r_j \leq a/x, \\
& r_j \geq \max \big(a/x,1 \big). 
\end{align*}

We first consider the case $r_j \geq \max \big( a/x,1 \big)$. For $A \geq \max \big( a/x,1 \big)$, we first prove 
\begin{multline}  \label{bound2}
\sqrt{\tilde{m} |\tilde{n}|}  \Bigg | \sum_{A \leq r_j \leq 2A} \frac{\overline{\rho_j(m)} \rho_j(n)}{\text{ch}(\pi r_j)} \hat{\phi}(r_j) \Bigg | \ll_{\varepsilon} (m_0 n_0)^{\theta+\varepsilon} \min \big \{\sqrt{A},\frac{x}{T  \sqrt{A}}  \big \}  \\
\times \big(1+A^{-1} |\log A| (s^{\frac{1}{4}+\varepsilon}+t^{\frac{1}{4}+\varepsilon})+A^{-2} |\log A| (st)^{\frac{1}{4}+\varepsilon} \big). 
\end{multline}
To prove \eqref{bound2} we start with \eqref{startpoint2}. Using \eqref{replace} in \eqref{startpoint2} and then applying Proposition~\ref{andersen}, we establish \eqref{bound2}.

Combining \eqref{startpoint} and \eqref{bound2}, for $A \geq \max \big( a/x,1 \big)$ we have
\begin{multline} \label{mincomp}
\sqrt{\tilde{m} \tilde{n}}  \Bigg | \sum_{A \leq r_j \leq 2A} \frac{\overline{\rho_j(m)} \rho_j(n)}{\text{ch}(\pi r_j)} \hat{\phi}(r_j) \Bigg | \ll_{\varepsilon}  \min \Big (\sqrt{A},\frac{x}{T \sqrt{A}} \Big) \times \\
\min \Bigg(1+ A^{-1} |\log A|  (m^{\frac{1}{4}+\varepsilon}+n^{\frac{1}{4}+\varepsilon})+A^{-2} |\log A| (mn)^{\frac{1}{4}},\\
 (m_0 n_0)^{\theta+\varepsilon} \Big[1+A^{-1} | \log A | (s^{\frac{1}{4}+\varepsilon}+t^{\frac{1}{4}+\varepsilon})+A^{-2} |\log A| (st)^{\frac{1}{4}+\varepsilon} \Big]  \Bigg).
\end{multline}
Using the facts that for positive $B,C$ and $D$ we have
\begin{equation*}
 \min  \big( B+C,D \big) \leq  \min \big( B,D \big)+ \min \big( C,D \big) \quad \text{and} \quad \min \big( B,C \big) \leq \sqrt{BC},
\end{equation*}
we simplify \eqref{mincomp}. This right side of \eqref{mincomp} is 
\begin{multline} \label{mincompup}
\ll_{\varepsilon} \min \Big (\sqrt{A},\frac{x}{T \sqrt{A}} \Big) \Bigg( 1+ (m_0 n_0)^{\frac{\theta}{2}+\varepsilon} |\log A| \Big[ A^{-1} (m^{\frac{1}{8}+\varepsilon}+n^{\frac{1}{8}+\varepsilon})(s^{\frac{1}{8}+\varepsilon}+t^{\frac{1}{8}+\varepsilon}) \\+A^{-\frac{1}{2}} (m^{\frac{1}{8}+\varepsilon}+n^{\frac{1}{8}+\varepsilon})  +A^{-\frac{3}{2}} (m^{\frac{1}{8}+\varepsilon}+n^{\frac{1}{8}+\varepsilon})(st)^{\frac{1}{8}+\varepsilon} 
+A^{-1} (mn)^{\frac{1}{8}+\varepsilon} \\
+A^{-\frac{3}{2}} (mn)^{\frac{1}{8}+\varepsilon} (s^{\frac{1}{8}+\varepsilon}+t^{\frac{1}{8}+\varepsilon} \big)+A^{-2} (mnst)^{\frac{1}{8}+\varepsilon} \Big] \Bigg).
\end{multline}
Summing over all dyadic intervals $[A,2A]$ with $A \geq \max \big( a/x,1 \big)$ and ignoring the smallest terms we obtain 
\begin{align} 
 \sqrt{\tilde{m} \tilde{n}}  \Big | \sum_{r_j  \geq \max (a/x,1)} \frac{\overline{\rho_j(m)} \rho_j(n)}{\text{ch}(\pi r_j)} \hat{\Phi}(r_j) \Big | & \ll_{\varepsilon} \Big( \sqrt{\frac{x}{T}}+(mnst)^{\frac{1}{8}+\varepsilon} (m_0 n_0)^{\frac{\theta}{2}+\varepsilon} \Big) \log^2 x \nonumber \\
 & \ll_{\varepsilon} \Big( \sqrt{\frac{x}{T}}+(mnst)^{\frac{1}{8}+\frac{\theta}{4}+\varepsilon} \Big) \log^2 x \label{Vapprox2}.
\end{align}

Since $x \geq (mn)^{\frac{1}{3}}$, we have $a/x \leq 4 \pi (mn)^{\frac{1}{6}}$. When $r_j \leq a/x$, Lemma \ref{int} and the fact that $r_j>1.9$ guarantees that we can replace $\min \big(\sqrt{A},x/T \sqrt{A} \big)$ in \eqref{mincomp} and \eqref{mincompup} with $\sqrt{A}$. Thus
\begin{equation} \label{ineq2}
\sqrt{\tilde{m} \tilde{n}}  \Big | \sum_{0<r_j \leq a/x } \frac{\overline{\rho_j(m)} \rho_j(n)}{\text{ch}(\pi r_j)} \hat{\phi}(r_j) \Big | \ll_{\varepsilon} (mnst)^{\frac{1}{8}+\frac{\theta}{4}+\varepsilon}.
\end{equation}

Combining \eqref{Vapprox2} and \eqref{ineq2} we have
\begin{equation} \label{ineq1}
\mathcal{V} \ll_{\varepsilon} \Big( \sqrt{\frac{x}{T}}+ (mnst)^{\frac{1}{8}+\frac{\theta}{4}} \Big) (mn)^{\varepsilon} \log^2 x.
\end{equation}
We choose $T:=x^{\frac{2}{3}}$ to balance \eqref{approx} (with $m,n>0$) and \eqref{ineq1}. When $x \geq (st)^{\frac{1}{6}} (mn)^{\frac{1}{3}}$, Lemma \ref{Ubd} gives
\begin{equation} \label{finalU}
\mathcal{U} \ll_{\varepsilon} (st)^{\frac{1}{4}+\varepsilon} +(st)^{\frac{1}{12}} (mn)^{\frac{1}{6}+\varepsilon}.
\end{equation}
Combining \eqref{approx}, \eqref{ineq1} and \eqref{finalU} finishes the proof. 

\end{proof}

\section{Acknowledgements}
The author thanks Professor Scott Ahlgren for his careful reading of the manuscript and both of the referees for their thorough reports. The author is grateful to Nick Andersen for his insightful comments.

\end{document}